\tikzset{ext/.style={circle, draw,inner sep=1pt},int/.style={circle,draw,fill,inner sep=0, minimum size=5},nil/.style={inner sep=1pt}}
\tikzset{xst/.style={draw, cross out, minimum size=5, }}
\tikzset{exte/.style={circle, draw,inner sep=3pt},inte/.style={circle,draw,fill,inner sep=3pt}}
\tikzset{diagram/.style={matrix of math nodes, row sep=3em, column sep=2.5em, text height=1.5ex, text depth=0.25ex}}
\tikzset{diagram2/.style={matrix of math nodes, row sep=0.5em, column sep=0.5em, text height=1.5ex, text depth=0.25ex}}
\theoremstyle{plain}
  \newtheorem{thm}{Theorem}
  \newtheorem{defi}{Definition}
  \newtheorem{prop}{Proposition}
  \newtheorem{lemma}{Lemma}
\theoremstyle{definition}
  \newtheorem{ex}{Example}
  \newtheorem*{rem}{Remark}
\newcommand{\alg}[1]{\mathfrak{{#1}}}
\newcommand{\co}[2]{\left[{#1},{#2}\right]} 
\newcommand{\pd}[2]{ { \frac{\partial {#1} }{\partial {#2} } } }
\newcommand{\R}{{\mathbb{R}}}
\newcommand{\Gra}{{\mathsf{Gra}}}
\newcommand{\dGra}{{\mathsf{dGra}}}
\newcommand{\gl}{{\mathfrak{gl}}}
\newcommand{\id}{{\mathit{id}}}
\newcommand{\XGra}{{\mathsf{XGra}}}
\newcommand{\Lie}{\mathsf{Lie}}
\newcommand{\Def}{\mathrm{Def}}
\newcommand{\mO}{\mathcal{O}}
\newcommand{\GC}{\mathsf{GC}}
\newcommand{\fGC}{\mathsf{fGC}}
\newcommand{\fXGC}{\mathsf{fXGC}}
\newcommand{\dGC}{\mathsf{dGC}}
\newcommand{\dfGC}{\mathsf{dfGC}}
\newcommand{\XGC}{\mathsf{XGC}}
\newcommand{\vout}{\mathit{out}}
\newcommand{\bpm}{\begin{pmatrix}}
\newcommand{\epm}{\end{pmatrix}}
\newcommand{\Tpoly}{T_{\rm poly}}
\newcommand{\hatTpoly}{\hat T_{\rm poly}}
\newcommand{\tildeTpoly}{\tilde T_{\rm poly}}
\newcommand{\Dpoly}{D_{\rm poly}}
\newcommand{\grt}{{\mathfrak{grt}}}
\DeclareMathOperator{\End}{End}
\DeclareMathOperator{\Aut}{Aut}
\newcommand{\tsp}{\mathfrak{gsp}}
\newcommand{\MC}{\mathsf{Cone}} 
\begin{document}
\title{Stable cohomology of polyvector fields}
\author{Thomas Willwacher}
\address{Institute of Mathematics\\ University of Zurich\\ Winterthurerstrasse 190 \\ 8057 Zurich, Switzerland}
\email{thomas.willwacher@math.uzh.ch}

\keywords{Formality, Deformation Quantization}

\begin{abstract}
We show that the stable cohomology of the algebraic polyvector fields on $\R^n$, with values in the adjoint representation is the symmetric product space on the cohomology of M. Kontsevich's graph complex, up to some known classes.
\end{abstract}
\maketitle

\section{Introduction}
Let $\Tpoly^{(n)}$ be the space of algebraic polyvector fields on $\R^n$. Concretely, $\Tpoly^{(n)}=\R[x^1,\dots ,x^n, \xi_1, \dots,\xi_n]$ is the space of polynomials in some degree 0 variables $x^1,\dots, x^n$ and degree 1 variables $\xi_1,\dots, \xi_n$. $\Tpoly^{(n)}$ is a Gerstenhaber algebra with the obvious commutative product and the bracket uniquely defined by the relations 
\[
 \co{\xi_i}{x^j}= \delta^j_i.
\]

There are inclusions of Gerstenhaber algebras
\[
 \cdots \to \Tpoly^{(n)} \to  \Tpoly^{(n+1)} \to \Tpoly^{(n+2)} \to \cdots.
\]
We define\footnote{In the following it will be important that the elements of $\Tpoly$ are polynomials and not power series, as are often considered in physics. The power series version requires different methods, cf. \cite{shoikhet, meoriented}.} 
\[
 \Tpoly = \lim_\rightarrow \Tpoly^{(n)} = \R[x^1,x^2,\dots , \xi_1, \xi_2, \dots].
\]
There is a natural subalgebra $\Tpoly^{\geq 2}\subset \Tpoly$ given by at least quadratic polynomials.
In the following, we will forget the graded commutative product on $\Tpoly$, we are only interested in the $\Lie\{1\}$ structure.\footnote{Here $\Lie\{1\}$ is the degree shifted Lie operad. Defining a $\Lie\{1\}$ structure on a space $V$ is equivalent to defining a $\Lie$ structure on the degree shifted space $V[1]$. The reader who likes $\Lie$ algebras better than $\Lie\{1\}$ algebras may replace all occurrences of $\Lie\{1\}$ by $\Lie$ and all occurrences of $\Tpoly$ by $\Tpoly[1]$. We work with $\Lie\{1\}$ here since it is more convenient regarding signs.} 
We define the graded symplectic $\Lie\{1\}$ algebra $\tsp(2n)\subset \Tpoly^{(n)}$ as the $\Lie\{1\}$ subalgebra spanned by the homogeneous quadratic polynomials.
M. Kontsevich has shown the following Theorem:
\begin{thm}[Kontsevich \cite{Kformal}]
\label{thm:konts}
 The $\Lie\{1\}$ algebra cohomology of $\Tpoly^{\geq 2}$ with values in the trivial representation is given by 
\[
 H(\Tpoly^{\geq 2}, \R) = \mathbf{S}( H_P(\tsp(2\infty), \R)\oplus H(\GC )[-2])
\]
Here $H_P(\tsp(2\infty),\R)$ is the primitive stable cohomology of the symplectic $\Lie\{1\}$ algebras (see below), $\mathbf{S}(\cdot)$ is the completed symmetric product space and $\GC$ is M. Kontsevich's graph complex, see \cite{megrt,Kgrcomplex} or section \ref{sec:grcomplex} below. 
\end{thm}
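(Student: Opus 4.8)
The plan is to compute the Chevalley--Eilenberg cohomology of $\Tpoly^{\geq 2}$ by combining a weight-reduction, the reductive subalgebra $\gl=\gl_\infty$ of linear vector fields $\sum_{i,j}x^i\xi_j$, and the classical invariant theory of $\gl$. As a first move I would use the Euler element $E=\sum_i x^i\xi_i\in\Tpoly^{\geq 2}$. Since $E$ lies in the algebra, $\ad_E$ is an inner derivation, and the Cartan homotopy formula $L_E=d\,\iota_E+\iota_E\,d$ shows that $L_E$ is null-homotopic on the CE complex. As $\ad_E$ acts diagonalizably with eigenvalue $(\#x-\#\xi)$, this concentrates the cohomology in $\ad_E$-weight zero, i.e.\ on \emph{balanced} cochains in which the number of $x$-slots equals the number of $\xi$-slots. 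This balancing is exactly the condition that makes a complete pairing of $x$- with $\xi$-derivatives possible, which is what will later produce edges of graphs.

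Second, I would exploit that $\gl_\infty\subset\Tpoly^{\geq 2}$ is reductive in the stable sense (a filtered union of the reductive $\gl_n$, with invariants computed by averaging and then passing to the limit). This lets me pass to $\gl$-invariant basic cochains and run the Hochschild--Serre spectral sequence for the pair $(\Tpoly^{\geq 2},\gl)$, which I expect to degenerate and give a factorization
\[
H(\Tpoly^{\geq 2},\R)\;\cong\;H(\gl,\R)\otimes H_{\mathrm{rel}}(\Tpoly^{\geq 2},\gl).
\]
The first factor is classical: stably $H(\gl,\R)=\mathbf{S}(H_P(\gl,\R))$ is the free graded-symmetric algebra on the primitive generators, which accounts for the $H_P(\gl,\R)$ summand of the answer. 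It then remains to identify the relative factor with $\mathbf{S}(H(\GC))$.

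Third --- and this is the heart of the argument --- I would identify the balanced, $\gl$-invariant relative complex with a graph complex. By the first fundamental theorem of invariant theory for $\gl$ (which produces no relations in the stable, large-$n$ range), every invariant balanced cochain is a sum of complete contractions of $x$- with $\xi$-slots; recording each contraction of a $\xi$-derivative against an $x$-derivative as a directed edge converts such an invariant into a graph, with the CE antisymmetry supplying the edge/vertex orientation data. The resulting directed graph complex is known to be quasi-isomorphic to Kontsevich's $\GC$. I would then check that the CE differential translates precisely into the vertex-splitting differential of $\GC$, and that disconnected graphs are unordered products of connected ones --- the connected graphs being the generators of $\GC$. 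Passing to cohomology gives $H_{\mathrm{rel}}=\mathbf{S}(H(\GC))$, and combining with the previous steps yields $\mathbf{S}(H_P(\gl,\R)\oplus H(\GC))$.

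The main obstacle I anticipate is this third step: getting the combinatorics exactly right, including the signs and degree shifts forced by the $\Lie\{1\}$ convention, verifying that the CE differential is literally the graph-complex differential, and controlling the stable limit so that invariant theory contributes no spurious relations while the completed symmetric products on the two sides match. Particular care is needed for tadpoles, self-contractions, and low-valence vertices --- precisely the configurations whose removal motivates working with $\Tpoly^{\geq 2}$ rather than $\Tpoly$ --- and for justifying the degeneration of the Hochschild--Serre spectral sequence in the second step.
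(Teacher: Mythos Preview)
The paper does not itself prove this theorem; it is quoted as Kontsevich's result from \cite{Kformal}. However, the paper's proof of Theorem~\ref{thm:main} is explicitly described as ``more or less a copy of M.~Kontsevich's proof of Theorem~\ref{thm:konts}, plus one additional combinatorial step,'' so one can read off the intended argument from Section~\ref{sec:proof}.

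Your proposal has the right ingredients (Cartan homotopy, reductivity of $GL$, classical invariant theory producing graphs, identification of the CE differential with vertex splitting), and the overall strategy is sound. The organization, however, differs from Kontsevich's. In the approach reflected in Section~\ref{sec:proof} one does \emph{not} split off $\gl$ via Hochschild--Serre. Instead one uses reductivity of $GL(n)$ together with the Cartan formula $L_x=d\iota_x+\iota_x d$ to pass directly from the full Chevalley complex to its $GL$-invariant part, identifies that invariant part in one stroke with a graph complex via invariant theory, and then computes. The primitive classes $H_P(\gl,\R)$ appear \emph{inside} this graph complex as the wheel graphs $w_n$ (see the Remark following Theorem~\ref{thm:konts} and Proposition~\ref{prop:dGC}), not as a separate spectral-sequence factor. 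Your step with the Euler element alone is subsumed by this, since $E\in\gl$.

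Your Hochschild--Serre route is a legitimate alternative, but it buys extra work rather than less: you must justify degeneration for the infinite-dimensional pair $(\Tpoly^{\geq 2},\gl_\infty)$, and you must identify the $\gl$-\emph{basic} relative cochains (not merely the invariants) with the correct sub-graph-complex---essentially graphs with no bivalent one-in/one-out vertex able to absorb an element of $\gl$. That excision is precisely what removes the wheels and leaves $\GC$, but it has to be argued. One correction along these lines: the directed graph complex is \emph{not} quasi-isomorphic to $\GC$; by Proposition~\ref{prop:dGC} one has $H(\dGC)\cong H(\GC)\oplus\prod_{n=1,5,9,\dots}\R W_n$. In Kontsevich's organization the wheels remain in the graph complex and are then recognized as the $\gl$ primitives; in yours you must show they migrate into the $H(\gl,\R)$ factor.
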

\begin{rem}
Concretely the primitive stable cohomology $H_P(\tsp(2\infty), \R)$ is
\[
H_P(\tsp(2\infty), \R) = \bigoplus_{n\equiv1\,\mathrm{ mod }\,4} \R w_n
\]
where $w_n$ is a cocyle of degree $n$.
The cocycles $w_n$ may also be identified with ``wheel'' graphs in a suitable graph complex, for example:
\[
\begin{tikzpicture}
\node at (-2,0) {$w_5\leftrightarrow$};
\draw (90:1) node[int] {} -- (162:1) node[int] {}
  -- (234:1) node[int] {} -- (306:1) node[int] {} -- (18:1) node [int] {} --cycle;
\end{tikzpicture}
\]
\end{rem}

Our goal here is to extend Theorem \ref{thm:konts} to Lie algebra cohomology of $\Tpoly$ with values in the adjoint representation. One remark is in order. There are three complexes which one may understand as the complex computing the stable cohomology of the $\Tpoly^{(n)}$, namely:
\begin{itemize}
 \item The Chevalley complex $C(\Tpoly, \Tpoly)$.
 \item The Chevalley complex $C(\Tpoly, \hatTpoly)$, where 
 \[
 \hatTpoly=\R[[x^1,x^2,\dots , \xi_1, \xi_2, \dots]]
 \]
 is the completed version of $\Tpoly$ with respect to the natural grading by degree.
  \item The Chevalley complex $C(\Tpoly, \tildeTpoly)$, where 
 \[
 \tildeTpoly=\lim_\leftarrow \Tpoly^{(n)}
 \]
 is the completed version of $\Tpoly$ with respect to the natural filtration by dimension.
\end{itemize}
The latter complex can be seen as a projective limit
\[
\lim_\leftarrow C(\Tpoly^{(n)}, \Tpoly^{(n)}).
\]
Here the maps 
\[
C(\Tpoly^{(n)}, \Tpoly^{(n)}) \leftarrow C(\Tpoly^{(n+1)}, \Tpoly^{(n+1)})
\]
are defined by using (i) the embedding $\Tpoly^{(n)} \to  \Tpoly^{(n+1)}$ from above and (ii) the map  $\Tpoly^{(n+1)} \to  \Tpoly^{(n)}$ of $\Tpoly^{(n)}$-modules obtained by setting $x_{n+1}=\xi_{n+1}=0$.
Note that we have the inclusions 
\[
C(\Tpoly, \Tpoly)\subset C(\Tpoly, \tildeTpoly) \subset C(\Tpoly, \hatTpoly).
\]

The result of this paper is that the cohomology of the second and third version is as follows.
\begin{thm}
\label{thm:main}
 The $\Lie\{1\}$ algebra cohomology of $\Tpoly$ with values in $\tildeTpoly$ or $\hatTpoly$ is
\[
 H(\Tpoly, \hatTpoly) =
  H(\Tpoly, \tildeTpoly) =
 \mathbf{S}( \R S[-2] \oplus \prod_{n\equiv1\,\mathrm{ mod }\,4}\R W_n[-2] \oplus H(\GC )[-2])[2].
\]
Here $W_n$ denotes the cocycle corresponding to a wheel graph with $n$ vertices (see below), $\mathbf{S}(\cdot)$ is the completed symmetric product space and $\GC$ is again M. Kontsevich's graph complex. The additional generator $S$ acts on a homogeneous polynomial $p\in \R[x^1,x^2,\dots , \xi_1, \xi_2, \dots]$ of $(x,\xi)$-degree $(r_1,r_2)$  as $S(p)=(r_1+r_2-2)p$.
\end{thm}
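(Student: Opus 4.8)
The plan is to pass from the Chevalley--Eilenberg complex to a graph complex, split off a completed symmetric algebra by a connectedness argument, and compute the remaining connected part by comparison with Kontsevich's Theorem \ref{thm:konts}.

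\emph{Graphical model.} I would regard $\Tpoly$ as the polynomial functions on the odd-symplectic vector space $V_\infty=\bigoplus_i(\R x^i\oplus \R\xi_i)$, the Schouten bracket being induced by the pairing $\co{\xi_i}{x^j}=\delta^j_i$. Since the Chevalley differential is built solely from this bracket, each application contracts one pair $x^i\leftrightarrow\xi_i$, and cochains of $C(\Tpoly,\hatTpoly)$ are naturally encoded by graphs: internal vertices carry the polyvector-field arguments, edges encode the contractions, and the adjoint coefficient contributes a single distinguished output vertex carrying the resulting polynomial. In the stable limit $n\to\infty$ the dimension dependence drops out, and I would first establish a quasi-isomorphism from a dimension-independent graph complex $\mathcal{C}$ with one output to $C(\Tpoly,\hatTpoly)$. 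The same model serves for $\tildeTpoly$; comparing the two completions, the difference is an acyclic piece, which gives $H(\Tpoly,\hatTpoly)=H(\Tpoly,\tildeTpoly)$.

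\emph{Symmetric algebra.} The complex $\mathcal{C}$ is cofree cocommutative for the disjoint union of graphs, and the differential preserves connected components. Hence the cohomology is the completed symmetric algebra on the cohomology of the connected subcomplex, $H(\Tpoly,\hatTpoly)=\mathbf{S}(H^{\mathrm{conn}})$, which explains the $\mathbf{S}(\cdot)$ in the statement and reduces everything to computing the connected one-output cohomology $H^{\mathrm{conn}}$.

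\emph{Comparison with the trivial-coefficient result.} To identify $H^{\mathrm{conn}}$ I would filter by the valence of the output vertex and run the associated spectral sequence. Forgetting the output recovers the connected trivial-coefficient complex of $\Tpoly^{\geq 2}$, whose cohomology is $H_P(\mathfrak{gl})\oplus H(\GC)$ by Theorem \ref{thm:konts}; the $H(\GC)$ summand does not interact with the output and survives verbatim. The output, together with the constant and linear parts of $\Tpoly$ (absent from $\Tpoly^{\geq 2}$), supplies the rest: I expect each wheel $w_n$ to deform to its one-output counterpart $W_n$, with the same progression $n=1,5,9,\dots$, together with exactly one genuinely new class, the scaling derivation $S$. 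That $S$ is a cocycle is immediate: with $S(p)=(r_1+r_2-2)p$ for $p$ of $(x,\xi)$-degree $(r_1,r_2)$, the relation $\co{\xi_i}{x^j}=\delta^j_i$ forces $S$ to be a derivation of the $\Lie\{1\}$ bracket, and a short check shows it is not a coboundary.

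\emph{Main obstacle.} The crux is the low-degree analysis. Kontsevich's theorem deliberately excludes the constant and linear parts of $\Tpoly$, so the new content is to control precisely what the Heisenberg piece (constants together with linear $x^i,\xi_i$, subject to $\co{\xi_i}{x^j}=\delta^j_i$) contributes once the output is present. I expect this piece to be cohomologically trivial apart from the single invariant direction producing $S$; proving this cleanly is the heart of the matter and the step most likely to require care, presumably via a Koszul-type acyclicity argument exploiting the nondegeneracy of the symplectic pairing on the linear part, combined with a $\mathrm{GL}$-invariant-theory argument ruling out any further surviving classes. Once this is in place, the remaining bookkeeping — that the output hair turns each $w_n$ into $W_n$ without gaining or losing classes, and that the two completions agree — is comparatively routine.
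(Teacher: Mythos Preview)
Your broad architecture matches the paper: reduce to a graph complex with a distinguished output vertex (the paper's $\fXGC$ of xgraphs), split off a completed symmetric algebra by connectedness (Lemma~\ref{lem:reduction}), and then compute the connected piece $\XGC$ by filtering on the valence of the output vertex $\vout$. The gap is in how you propose to analyse that filtration.

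You assert that on the associated graded ``forgetting the output recovers the connected trivial-coefficient complex of $\Tpoly^{\geq 2}$'', so that Theorem~\ref{thm:konts} can be invoked as a black box, and that what remains is a separate ``Heisenberg piece'' to be handled by a Koszul-type argument. Neither step is right as stated. On the valence filtration the leading differential $\delta_1$ does not simply forget the output: it \emph{detaches} one edge from $\vout$ and reattaches it to a newly created univalent vertex. The associated graded is therefore not the trivial-coefficient complex of $\Tpoly^{\geq 2}$, and there is no clean Heisenberg summand to isolate. The paper's actual mechanism is a combinatorial \emph{core} argument: group xgraphs by the isomorphism class obtained after collapsing all univalent vertices back into $\vout$; the differential $\delta_1$ preserves cores, and for any core having an edge at $\vout$ the corresponding subcomplex factors as a tensor product of explicitly acyclic $1$-, $2$-, or $4$-dimensional complexes. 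This forces the $\delta_1$-cohomology to sit entirely at $\vout$-valence zero, yielding $H(\XGC)\cong H(\dGC)\oplus\R S$ (Proposition~\ref{prop:XGC}). The wheels $W_n$ then enter not as deformations of the $w_n$ from Theorem~\ref{thm:konts} but via the independent computation $H(\dGC)\cong\prod_{n}\R W_n\oplus H(\GC)$ of Proposition~\ref{prop:dGC}; Theorem~\ref{thm:konts} is never invoked. A smaller point: the paper does not compare $\hatTpoly$ and $\tildeTpoly$ to each other and cancel an ``acyclic difference''; it runs the identical inverse-limit argument twice (with $\hatTpoly^{(n)}$ and with $\Tpoly^{(n)}$), using reductivity of $GL(n)$, the Cartan formula, and a Mittag-Leffler argument on a tower of mapping cones to pass from $\fXGC^{(n)}$ to $\fXGC$.
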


\begin{rem}
It has been shown by the author \cite{megrt} that $H^0(\GC )$ may be identified with the Grothendieck-Teichm\"uller Lie algebra $\grt_1$. It follows that the degree 0 cohomology in Theorem \ref{thm:main} above is spanned by $\grt_1$ and the additional generator $S$.
\end{rem}

The cohomology of $C(\Tpoly, \Tpoly)$ is more subtle, so we postpone its treatment to section \ref{sec:TpolyTpolycohom} below.
The definition of M. Kontsevich's graph complex will be recalled in section \ref{sec:grcomplex}. There it will also be recalled how graph cocycles can be mapped to $\Lie\{1\}$ algebra cocycles.


\begin{rem}
For simplicity we use $\R$ as our ground field. However, one may replace $\R$ by any field of characteristic zero.
\end{rem}

\subsection*{Acknowledgements}
This work benefitted from discussions with A. Khoroshkin and V. Dolgushev.
In particular I am grateful to V. Dolgushev for sharing an early version of his manuscript \cite{vasilystable} with me. Section \ref{sec:discussion} is dedicated to clarifying the relation of the present work with his. Most of this work was written while the author was a Junior Fellow of the Harvard Society of Fellows. The author is grateful for partial support by the Swiss National Science foundation, grant 200021\_150012 and the NCCR SwissMAP.

\section{Recollection: M. Kontsevich's graph complex}
\label{sec:grcomplex}
We recall here the definition of M. Kontsevich's graph complex \cite{Kgrcomplex}, following \cite{megrt} and \cite{megraphthings}.\footnote{The notation here will simplified compared to that of \cite{megrt}. In particular the graph complex $\GC$ here corresponds to $\GC_2$ there, and $\fGC$ here corresponds to $\fGC^\circlearrowleft$ there.}
Let us define the spaces
\[
 \Gra(n) := \prod_{k}\left( \{\text{$\R$-linear comb. of graphs with vertex set $[n]$ and edge set $[k]$} \}\otimes \R[k] \right)_{\mathbb{S}_k}.
\]
Here the notation is as follows. A \emph{graph} with vertex set $[n]:=\{1,\dots,n\}$ and edge set $[k]$ is an ordered set of unordered pairs of elements in $[n]$. 
We understand the $j$-th pair in this set as the $j$-th edge, or the edge labelled by $j$. Note that in particular, we allow \emph{tadpoles} or \emph{short cycles}, i.~e., edges of the form $(i,i)$.
 The symmetric group $S_k$ acts on such a graph by permuting the labels on edges. In the definition of $\Gra(n)$ we let $\mathbb{S}_k$ act also on $\R[k]$ by sign. Hence each edge contributes $-1$ to the degree, and exchanging the labels on two edges produces a minus sign.
For example, the following element of $\Gra(2)$ is zero by symmetry:
\[
\begin{tikzpicture}
\begin{scope}[xshift=-.3cm]
\node[ext] (e1) at (0,0) {1};
\node[ext] (e2) at (1,0) {2};
\draw (e1) edge[bend left] (e2)
           edge[bend right] (e2);
\end{scope}
\node at (1.35,0){= -};
\begin{scope}[xshift=2cm]
\node[ext] (e1) at (0,0) {1};
\node[ext] (e2) at (1,0) {2};
\draw (e1) edge[bend left] (e2)
           edge[bend right] (e2);
\end{scope}
\node at (3.8,0){=\, 0};
\end{tikzpicture}
\] 
In drawings we generally suppress the edge labels for simplicity, thus leaving a sign ambiguity.
The vector space $\Gra(n)$ carries a natural right action of $\mathbb{S}_n$ by permuting the vertices. In fact, the spaces $\Gra(n)$ assemble to form an operad $\Gra$. The operadic composition is given by ``inserting'' one graph at a vertex of another graph (see \cite[section 3]{megrt}, \cite{megraphthings}).
There is a natural action of $\Gra$ on $\Tpoly$. 
A graph $\Gamma\in \Gra(n)$ acts on polyvector fields $\gamma_1,\dots, \gamma_n\in \Tpoly$ by the formula
\[
 \Gamma(\gamma_1,\dots,\gamma_n) = \mu\circ \left( \prod_{(i,j)}\sum_{k=1}^d \pd{}{x^k_{(j)}} \pd{}{\xi_k^{(i)}} +\pd{}{x^k_{(i)}} \pd{}{\xi_k^{(j)}} \right)\left(\gamma_1\otimes\cdots\otimes\gamma_n \right).
\]
Here $\mu$ is the operation of multiplication of $n$ polyvector fields and the product runs over all edges $(i,j)$ in $\Gamma$, in the order given by the numbering of edges. The notation $\pd{}{x^k_{(j)}}$ means that the partial derivative is to be applied to the $j$-th factor of the tensor product, and similarly for $\pd{}{\xi_k^{(i)}}$.

There is a map of operads $\Lie\{1\}\to \Gra$, sending the generator to the graph 
\[
\begin{tikzpicture}
\node[ext] (e1) at (0,0) {1};
\node[ext] (e2) at (1,0) {2};
\draw (e1) edge (e2);
\end{tikzpicture}.
\]
The full graph complex is defined as the operadic deformation complex (see \cite[section 2]{megrt} for the definition) 
\[
\fGC := \Def((\Lie\{1\})_\infty\to \Gra)\cong \prod_{n=1}^\infty \Gra(n)^\mathbb{S_n}[2-2n].
\]
Elements are linear combinations of graphs, symmetric under permutations of the vertex numbers. In pictures, we indicate this by drawing black vertices without numbers:
\[
\begin{tikzpicture}
\node[int] (e1) at (0,0) {};
\node[int] (e2) at (1,0) {};
\node[int] (e3) at (1,1) {};
\node[int] (e4) at (0,1) {};
\draw (e1) edge (e2) edge (e3) edge (e4)
      (e2) edge (e3) edge (e4)
      (e3) edge (e4);
\end{tikzpicture}
\] 
$\fGC$ is a differential graded Lie algebra, as is any deformation complex. The differential is the bracket with the Maurer-Cartan element 
\[
\begin{tikzpicture}
\node[int] (e1) at (0,0) {};
\node[int] (e2) at (1,0) {};
\draw (e1) edge (e2);
\end{tikzpicture}\, .
\]
This amounts to splitting vertices of graphs.
\begin{defi}
M. Kontsevich's graph complex $\GC$ is the differential graded Lie subalgebra of $\fGC$ spanned by connected graphs without tadpoles (short cycles) all of whose vertices are at least trivalent.
\end{defi}
It is shown in \cite{megrt} that this is indeed a differential graded Lie subalgebra.
Furthermore, it is shown in \cite[Proposition 3.4]{megrt} that the cohomology of $\fGC$ may be expressed through that of $\GC$ and the wheel classes $W_n$.
The graph cohomology is the cohomology of $\GC$ and is in general hard to compute. The main result of \cite{megrt} is the following Theorem:
\begin{thm}[\cite{megrt}]
$H^0(\GC )\cong \grt_1$ is the Grothendieck Teichm\"uller Lie algebra. Furthermore $H^{<0}(\GC)=0$.
\end{thm}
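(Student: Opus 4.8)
The plan is to realize $\GC$ as the essential part of the deformation complex of the little $2$-disks operad $E_2$, and to match this with the known description of $\grt$ as the Lie algebra of its homotopy symmetries. The starting point is Kontsevich's graph operad $\Graphs$, the operadic enhancement of $\Gra$ obtained by allowing ``internal'' (black, unnumbered) vertices alongside the external numbered ones; it is a model for the chains $C_*(E_2)$. The complex $\GC$, built from graphs with internal vertices only, sits naturally inside the operadic derivation complex of $\Graphs$, and $\fGC=\Def((\Lie\{1\})_\infty\to \Gra)$ is its leg-free ambient complex. First I would make precise the relation between $\GC$, $\fGC$, and the (hairy) derivation complex $\mathsf{Der}(\Graphs)$. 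The two external inputs that make this useful are (i) Kontsevich's formality quasi-isomorphism $\Graphs\simeq C_*(E_2)$, so that homotopy derivations of $\Graphs$ compute those of $E_2$, and (ii) the theory of Drinfeld associators, through which $\grt$ acts on $E_2$.

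With this setup, the map $\grt\to H^0(\GC)$ I would construct from the associator side: a Drinfeld associator yields a formality morphism which, via Kontsevich-type integrals, is expressible as a sum of graphs, and its tangent (``logarithmic derivative'') produces a degree-$0$ graph cocycle. Checking that this assignment is a morphism of Lie algebras and is injective is the first concrete task. The reverse inclusion --- that \emph{every} degree-$0$ graph cohomology class arises this way --- is where the real work lies: one must show that the deformation complex of $E_2$ carries no extra degree-$0$ cohomology beyond $\grt$ and the explicitly known classes.

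The technical heart, and the main obstacle, is the comparison between the hairy derivation complex $\mathsf{Der}(\Graphs)$ (graphs carrying external legs) and the leg-free complexes $\fGC$, $\GC$. I would set up a spectral sequence for the filtration by number of external legs (and internally by loop order, i.e.\ by the first Betti number $b=e-v+1$ of a graph with $v$ vertices and $e$ edges), and show that the hairy part contributes only explicitly computable cohomology: the loop-order-one ``wheel'' classes $w_n$ for $n\equiv 1\bmod 4$, together with a small number of trivial or known low-degree classes, and nothing new in degree $0$. Pinning down these low-loop-order pieces exactly --- proving that the wheels are the only surviving hairy classes and that they do not interfere with the degree-$0$ computation --- is the combinatorially delicate step on which the whole identification rests, and I expect it to consume most of the proof.

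Finally, for the vanishing $H^{<0}(\GC)=0$: writing $d=2v-2-e$ for the cohomological degree of a graph with $v$ vertices and $e$ edges, negative-degree classes are not excluded by the grading alone (they occur once $e$ is large enough relative to $v$), so a genuine argument is required. I would deduce the vanishing from the same deformation-theoretic picture: it amounts to the intrinsic formality of $E_2$ in the relevant range, i.e.\ to the absence of negative-degree homotopy derivations, which the spectral-sequence comparison above reduces to the explicit vanishing of the low-loop-order contributions. I expect the degree-$0$ identification with $\grt$ and the negative-degree vanishing to fall out of one and the same computation, with the former being by far the more demanding.
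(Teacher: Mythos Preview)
The paper does not contain a proof of this theorem. It is stated with attribution to \cite{megrt} and \cite{sergeime} and used as a black box; no argument for it appears anywhere in the present paper. So there is nothing here to compare your proposal against.

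For what it is worth, your sketch is broadly in the spirit of the cited source \cite{megrt}: realize $\GC$ inside the derivation complex of a graphical model $\Graphs$ of $E_2$, compare with the hairy graph complex via a spectral sequence, and identify the degree-zero part with $\grt$ through the action of associators. If you want feedback on the correctness of that strategy you should consult \cite{megrt} directly; within this paper the theorem is purely a citation.
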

It is not known how to compute the higher graph cohomology. It is however known that it is not zero by computer experiments, see e.~g. \cite{barnatanmk}. 
Because of the action of $\Gra$ on $\Tpoly$ there is a natural map from $\fGC$ into the Chevalley complex $C(\Tpoly, \Tpoly)$ of $\Tpoly$:
\[
\fGC = \Def((\Lie\{1\})_\infty\to \Gra) \to
 \Def((\Lie\{1\})_\infty\to \End(\Tpoly)) \to C(\Tpoly, \Tpoly).
\]
This explains how graph cohomology classes can be understood as $\Lie\{1\}$ algebra cohomology classes (with values in the adjoint representation) in Theorem \ref{thm:main}.
It does not yet explain how graph cohomology classes can be understood as $\Lie\{1\}$ algebra cohomology classes of $\Tpoly^{\geq 2}$ with values in the trivial representation as in Theorem \ref{thm:konts}. However, note that by the inclusion $\Tpoly^{\geq 2}\to \Tpoly$ and by the map of $\Tpoly^{\geq 2}$-modules $\Tpoly\to \R$ by evaluating at 0 there is a natural map of complexes
\[
C(\Tpoly, \Tpoly) \to C(\Tpoly^{\geq 2}, \Tpoly) \to C(\Tpoly, \R).
\]
This means that graph cohomology classes also yield $\Lie\{1\}$ algebra cohomology classes of $\Tpoly^{\geq 2}$ with values in the trivial representation, as in Theorem \ref{thm:konts}.

\subsection{Directed graphs}
Instead of working with undirected graphs as above, one may work with directed graphs. The definitions from above carry over to this setting. In particular, we obtain an operad $\dGra$ of directed graphs, and a directed full graph complex
\[
\dfGC = \Def((\Lie\{1\})_\infty\to \dGra)\cong \prod_{n=1}^\infty \dGra(n)^\mathbb{S_n}[2-2n].
\]
It also acts on $\Tpoly$ in a natural way. There is a map of operads 
\[
\Gra\to \dGra
\]
sending an undirected graph to the sum of directed graphs obtained by assigning arbitrary directions to the edges.
Hence we also have a map of dg Lie algebras $\fGC\to \dfGC$.
We make the following definition 
\begin{defi}
The directed graph complex $\dGC$ is the sub-dg Lie algebra of $\dfGC$ spanned by connected graphs all of whose vertices are at least two-valent.
\end{defi}
Note that here we require vertices to be at least two-valent instead of trivalent. There is a map of dg Lie algebras $\GC\to \dGC$.
The following result was shown in \cite{megrt}.\footnote{A significantly weaker statement can also be found in \cite{ADR}. }
\begin{prop}
\label{prop:dGC}[see \cite[Appendix K]{megrt}\footnote{Note that that in loc. cit. different notation conventions regarding $\dGC$ are used.}]
 \[
  H(\dGC) \cong \bigoplus_{n\equiv1\,\mathrm{ mod }\,4} \R W_n \oplus H(\GC).
 \]
\end{prop}
Here $W_n$ corresponds to the wheel graph with $n$ vertices of cohomological degree $n-2$:
\[
\begin{tikzpicture}
[xst/.style={draw, cross out, minimum size=5, }, 
int/.style={draw, circle, fill, inner sep=2},
arr/.style={-triangle 60},]

\node [int] (v1) at (-1,0.5) {};
\node [int] (v2) at (-1.5,0) {};
\node [int] (v3) at (-1,-0.5) {};
\node [int] (v4) at (0,-0.5) {};
\node [int] (v5) at (0.5,0) {};
\draw (v1) edge (v2);
\draw (v2) edge (v3);
\draw (v3) edge (v4);
\draw (v5) edge (v4);
\node  (v6) at (0,0.5) {$\cdots$};
\draw (v1) edge (v6);
\draw (v5) edge (v6);
\end{tikzpicture}
\]

\begin{rem}
This means in particular that the cohomology of all the graph complexes above can be expressed using $H(\GC)$, up to some known classes.
\end{rem}

\section{xgraphs}
\label{sec:xgraphs}
\begin{defi}
 An \emph{xgraph} is a directed graph with vertex set $\{\vout\} \sqcup [n]$, where $[n]=\{1,\dots,n\}$ and $n$ can be $0,1,2,3,\dots$. The special vertex ``$\vout$'' we call the output vertex, the vertices $[n]$ we call the input vertices. The edges are labelled by numbers $1,2,\dots, k$, i.~e., the edge set is $[k]$.
\end{defi}
Note in particular that tadpoles are allowed. They are also allowed at the special vertex $\vout$.
A typical xgraph is shown in Figure \ref{fig:xgraph}. Note that we again suppress the edge labels to not clutter the picture too much.
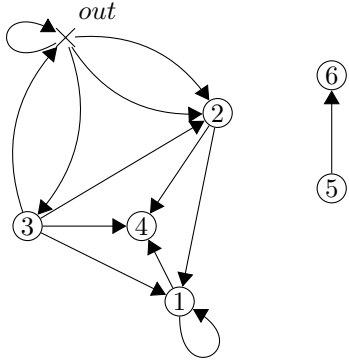
\begin{figure}
\centering
\begin{tikzpicture}
[
xst/.style={draw, cross out, minimum size=5, }, 
int/.style={draw, circle, fill, inner sep=2},
arr/.style={-triangle 60},
arrs/.style={-triangle 60},
]
\node [xst,label=70:{$out$}] (v1) at (0,0) {};
\node [ext] (v2) at (-0.5,-2.5) {3};
\node [ext] (v3) at (1.5,-3.5) {1};
\node [ext] (v4) at (1,-2.5) {4};
\node [ext] (v5) at (2,-1) {2};
\draw[arrs] (v1) edge[loop, in=150, out=210, looseness=0.8, loop , distance=1cm] (v1);
\draw[arrs] (v2) edge[bend left] (v1);
\draw[arrs] (v1) edge[bend left] (v2);
\draw[arr] (v2) edge (v3);
\draw[arr] (v3) edge (v4);
\draw[arr] (v5) edge (v4);
\draw[arr] (v2) edge (v5);
\draw[arr] (v2) edge (v4);
\draw[arr] (v5) edge (v3);
\draw[arrs] (v1) edge[bend left] (v5);
\draw[arrs] (v1) edge[bend right] (v5);
\draw[arr] (v3) edge[loop, in=-30, out=-90, looseness=0.8, loop , distance=1cm] (v3);
\node [ext] (v7) at (3.5,-0.5) {6};
\node [ext] (v6) at (3.5,-2) {5};
\draw[arr] (v6) edge (v7);
\end{tikzpicture}
\caption{\label{fig:xgraph} Some xgraph. It has three connected components after deleting $\vout$, namely: (i) the tadpole at $\vout$, (ii) the two vertices on the right and (iii) the remainder of the graph.}
\end{figure}
We distinguish three kinds of edges: (i) ``Normal edges'' between two input vertices. They will be considered to be of degree -1. (ii) ``Special edges'' between one input and the output vertex. They are assigned degree zero. (iii) ``Special tadpoles'', which are tadpoles at $\vout$. They will be assigned degree +1. 

We denote the number of those edges by $k_1, k_2, k_3$, i.e., $k=k_1+k_2+k_3$. There is a natural action of the symmetric groups $\mathbb{S}_n$ and $\mathbb{S}_k$ on any xgraph with $n$ input vertices and $k$ edges. We define the following vector spaces of xgraphs:
\begin{multline*}
 \XGra(n) := 
 \\
 \prod_{k}\left( \{\text{$\R$-linear comb. of xgraphs with $n$ input vert. and $k$ edges} \} [k_1-k_3] \right)_{\mathbb{S}_k}.
\end{multline*}
Here the action of $\mathbb{S}_k$ is by permutation of edge labels, with appropriate minus signs if odd edges are interchanged. In particular, xgraphs with $k_3\geq 2$ special tadpoles are zero by symmetry.

The spaces $\XGra(n)$ assemble to form an operad $\XGra$.
Let $\Gamma, \Gamma'$ be xgraphs then the composition  $\Gamma\circ_j\Gamma'$ is defined as follows:
\begin{enumerate}
\item If the number of incoming edges at vertex $j$ of $\Gamma$ is not equal to the number of outgoing edges at $\vout$ of $\Gamma'$ the result is zero. If the number of outgoing edges at vertex $j$ of $\Gamma$ is not equal to the number of incoming edges at $\vout$ of $\Gamma'$ the result is also zero. Otherwise proceed.
\item Delete the input vertex $j$ of $\Gamma$ and vertex $\vout$ of $\Gamma'$. this leaves several ``dangling'' edges in $\Gamma$ and $\Gamma'$.
\item The result is the sum over all possible xgraphs obtained by gluing outgoing dangling edges of $\Gamma$ to incoming dangling edges of $\Gamma'$ and vice versa. Here the numbering on edges is modified such that edges of $\Gamma$ have smaller labels than those of $\Gamma'$. 
\end{enumerate}

Here is an example of an operadic composition:
\[
\begin{tikzpicture}[every edge/.style={draw, -triangle 60}, scale=.7]
\tikzset{ext/.style={circle, draw,inner sep=1pt},int/.style={circle,draw,fill,inner sep=1pt},nil/.style={inner sep=1pt}}
\tikzset{xst/.style={draw, cross out, minimum size=5, }}

\node [ext] (v1) at (-4,0) {1};
\node [ext] (v3) at (-3,0) {2};
\node [ext] (v4) at (-2,0) {3};
\node [ext] (v6) at (0,0) {1};
\node [ext] at (1,0) {2};
\node [ext] (v7) at (4,0) {1};
\node [ext] (v9) at (5,0) {2};
\node [ext] (v10) at (6,0) {3};
\node [ext] at (6.7,0) {4};
\node [ext] (v11) at (8,0) {1};
\node [ext] (v13) at (9,0) {2};
\node [ext] (v14) at (10,0) {3};
\node [ext] at (10.7,0) {4};
\node [xst, label=90:{$\vout$}] (v2) at (-3,1.5) {};
\node [xst] (v5) at (0.5,1.5) {};
\node [xst] (v8) at (5,1.5) {};
\node [xst] (v12) at (9,1.5) {};
\draw  (v1) edge (v2);
\draw  (v2) edge (v3);
\draw  (v2) edge (v4);

\draw  (v4) edge[bend right] (v2);
\draw  (v5) edge[loop, looseness=15] (v5);
\draw  (v5) edge (v6);
\node at (-1,1) {$\circ_3$};
\node at (2.5,1) {$=$};
\node at (7.5,1) {+};
\draw  (v7) edge (v8);
\draw  (v8) edge (v9);
\draw  (v8) edge[loop, looseness=15] (v8);
\draw  (v1) edge[bend right] (v4);
\draw  (v7) edge[bend right] (v10);
\draw  (v11) edge (v12);
\draw  (v12) edge (v13);
\draw  (v12) edge (v14);
\draw  (v11) edge[bend left] (v12);
\end{tikzpicture}
\]

The operad $\dGra$ from above acts from the right on (the $\mathbb{S}$-module) $\XGra$ by ``insertions''. Furthermore there is a natural map of operads $\dGra\to \XGra$ by right action on the identity element $I\in \XGra(1)$. Concretely 
\begin{equation}
\label{equ:Idef}
 \begin{tikzpicture}
[xst/.style={draw, cross out, minimum size=5, }, 
int/.style={draw, circle, fill, inner sep=2},
arr/.style={-triangle 60},]

\node[xst] at (0,2) {};
\node [ext] at (0,1) {1};
\node [ext] (v1) at (1,1) {1};
\node [ext] (v4) at (2,1) {1};
\node [ext] (v6) at (3.2,1) {1};
\node [ext] (v8) at (4.2,1) {1};
\node [xst] (v2) at (1,2) {};
\node [xst] (v3) at (2,2) {};
\node [xst] (v5) at (3.2,2) {};
\node [xst] (v7) at (4.2,2) {};
\node at (0.5,1.5) {$+$};
\node at (-0.5,1.5) {$=$};
\node at (6.2,1.5) {``$=$''$\, \exp$};
\node at (-1,1.5) {$I$};
\node at (3.7,1.5) {$+$};
\node at (2.5,1.5) {$+ \, \frac{1}{2!}$};
\node at (1.5,1.5) {$+$};
\draw[arr] (v1) edge (v2);
\draw[arr] (v3) edge (v4);
\draw[arr] (v5) edge[bend right] (v6);
\draw[arr] (v5) edge[bend left] (v6);
\draw[arr] (v7) edge[bend left] (v8);
\draw[arr] (v8) edge[bend left] (v7);
\node at (5,1.5) {$+ \cdots$};
\begin{scope}[shift={(6.6,0)}]
\node [ext] (v1) at (1,1) {1};
\node [ext] (v4) at (2,1) {1};
\node [xst] (v2) at (1,2) {};
\node [xst] (v3) at (2,2) {};

\draw[arr] (v1) edge (v2);
\draw[arr] (v3) edge (v4);

\node at (1.5,1.5) {$+$};
\end{scope}
\node[scale=1.5] at (7,1.5) {\huge $($};
\node[scale=1.5] at (9.1,1.5) {\huge $)$};
\end{tikzpicture}.
\end{equation}
The sum is over all possible ways to connect $\vout$ to one vertex.
The coefficient of each term is $\frac{1}{k!l!}$ where $k$ and $l$ are the numbers of in- and outgoing edges at $\vout$.
From the map $\Lie\{1\}\to \dGra$ we hence get a map $\Lie\{1\}\to \XGra$. 
\begin{defi}
The \emph{full extended graph complex} is the deformation complex
\[
 \fXGC = \Def(\Lie\{1\}\to \XGra)\cong \prod_{n=1}^\infty \XGra(n)^\mathbb{S_n}[2-2n].
\]
The \emph{extended graph complex} is the subcomplex $\XGC\subset \fXGC$ spanned by graphs which are connected and non-empty after deleting the vertex $\vout$.\footnote{The non-emptiness condition excludes the graph consisting of just the vertex $\vout$.}
\end{defi}

$\fXGC$ is a dg Lie algebra (as is any deformation complex). $\XGC$ is a Lie subalgebra. Note that from the inclusions of operads $\Gra\to \dGra\to\XGra$ we obtain inclusions of dg Lie algebras $\GC\to\dGC\to \XGC$. 
Elements of $\fXGC(n)$ are series in xgraphs (modulo permutations of the edge labels), symmetric under interchange of the vertex labels. We indicate this in pictures by not drawing vertex labels but black dots instead, understanding that one should sum over all possible ways of putting vertex labels. For example
\[
\begin{tikzpicture}[every edge/.style={draw, -triangle 60}, scale=.7]

\node [xst] (v2) at (-0.4,0.6) {};
\node [xst] (v5) at (2.4,0.6) {};
\node [xst] (v8) at (5.2,0.6) {};
\node [int] (v1) at (-0.4,-1.2) {};
\node [int] (v3) at (1,-1.2) {};
\node [ext] (v4) at (2.4,-1.2) {1};
\node [ext] (v6) at (3.6,-1.2) {2};
\node [ext] (v7) at (5.2,-1.2) {2};
\node [ext] (v9) at (6.4,-1.2) {1};
\draw  (v1) edge (v2);
\draw  (v1) edge (v3);
\draw  (v4) edge (v5);
\draw  (v4) edge (v6);
\draw  (v7) edge (v8);
\draw  (v7) edge (v9);
\node at (1.4,-0.4) {=};
\node at (4.2,-0.2) {+};
\end{tikzpicture}
\] 

Let us discuss the differential on $\XGC$ (and $\fXGC$). It is defined as the bracket with the element 
\[
\begin{tikzpicture}
\node at (-.9,0) {$m=\,I\circ$};
\node[int] (e1) at (0,0) {};
\node[int] (e2) at (.6,0) {};
\draw (e1) edge (e2);
\end{tikzpicture}.
\]
The bracket produces two terms, which one can depict as follows.
\begin{equation}
\label{equ:diffpict}
\begin{tikzpicture}[baseline=-.65ex,every edge/.style={draw, -triangle 60}]
\node [xst] (v1) at (-2.5,1) {};
\node [xst] (v3) at (-0.5,1) {};
\node [xst] (v6) at (3,1) {};
\node [int] (v2) at (-2.5,-1) {};
\node [int] (v4) at (-0.5,-1) {};
\node [int] (v5) at (0,-1) {};
\node [int] (v7) at (3,-1) {};
\node [int] (v8) at (2,0) {};
\draw  (v1) edge[bend left] (v2);
\draw  (v2) edge[bend left] (v1);
\node at (-3.5,0) {$\delta$};
\node at (-1.5,0) {$=\, \sum$};
\node at (1,0) {$+ \, \sum$};
\draw  (v1) edge (v2);
\draw  (v3) edge (v4);
\draw  (v3) edge (v5);
\draw  (v4) edge[bend left] (v3);
\draw  (v4) edge (v5);
\draw  (v6) edge (v7);
\draw  (v8) edge (v7);
\draw  (v7) edge[bend right] (v6);
\draw  (v6) edge[very thick, double, dashed, -] (v8);
\foreach \y in {.25,0,-.25}
{
\draw (v2) -- +(-.4, \y);
\draw (v2) -- +(.4, \y);
\draw (v4) -- +(-.4, \y);
\draw (v5) -- +(.4, \y);
\draw (v7) -- +(-.4, \y);
\draw (v7) -- +(.4, \y);
}
\end{tikzpicture}
\end{equation}
Here only one vertex and the output vertex are drawn, it should be understood that this is only a part of the bigger graph, and the differential acts on the other parts similarly.
To produce the first term on the right the vertex is split into two and the incoming edges are reconnected in an arbitrary way. To produce the second term one detaches one edge from $\vout$ and connects it to a newly created vertex. One furthermore sums over all ways of connecting this new vertex to $\vout$. In other words, the double dashed line stands for
\[
 \begin{tikzpicture}
[
arr/.style={-triangle 60},]

\node[xst] (x0) at (-1,2) {};
\node [int] (x1) at (-1,1) {};
\draw (x0) edge[very thick, double, dashed, -] (x1);
\node[xst] at (0,2) {};
\node [int] at (0,1) {};
\node [int] (v1) at (1,1) {};
\node [int] (v4) at (2,1) {};
\node [int] (v6) at (3.4,1) {};
\node [int] (v8) at (4.4,1) {};
\node [xst] (v2) at (1,2) {};
\node [xst] (v3) at (2,2) {};
\node [xst] (v5) at (3.4,2) {};
\node [xst] (v7) at (4.4,2) {};
\node at (0.5,1.5) {$+$};
\node at (-0.5,1.5) {$=$};
\node at (3.9,1.5) {$+$};
\node at (2.7,1.5) {$+\, \frac{1}{2!}$};
\node at (1.5,1.5) {$+$};
\node at (5.2,1.5) {$+ \cdots$};
\draw[arr] (v1) edge (v2);
\draw[arr] (v3) edge (v4);
\draw[arr] (v5) edge[bend right] (v6);
\draw[arr] (v5) edge[bend left] (v6);
\draw[arr] (v7) edge[bend left] (v8);
\draw[arr] (v8) edge[bend left] (v7);
\node at (6.2,1.5) {``$=$''$\, \exp$};
\begin{scope}[shift={(6.6,0)}]
\node [int] (v1) at (1,1) {};
\node [int] (v4) at (2,1) {};
\node [xst] (v2) at (1,2) {};
\node [xst] (v3) at (2,2) {};
\draw[arr] (v1) edge (v2);
\draw[arr] (v3) edge (v4);
\node at (1.5,1.5) {$+$};
\end{scope}
\node[scale=1.5] at (7,1.5) {\huge $($};
\node[scale=1.5] at (9.1,1.5) {\huge $)$};
\end{tikzpicture}.
\]
Note that the first term in particular produces a new vertex of valence one. This term will be important later.

\subsection{An alternative definition of \texorpdfstring{$\XGra$ and $\fXGC$}{XGra and fXGC}}
\label{sec:altdef}
Consider the endomorphism operad of $\Tpoly^{(n)}$, i.~e., $\End(\Tpoly^{(n)})$. It carries a natural action of $GL(n)$. Let us consider the space of $GL(n)$ invariants $\End(\Tpoly^{(n)})^{GL(n)}$. 
$\End(\Tpoly^{(n)})$ is made of products and symmetric products of the fundamental representation $\R^n$ and its dual $(\R^n)^*$. Hence, by classical invariant theory elements of $\End(\Tpoly^{(n)})^{GL(n)}$ can be written as certain graphs, where each edge stands for one pairing of $\R^n$ and $(\R^n)^*$. If one looks at the structure of the $GL(n)$ representation $\End(\Tpoly^{(n)})$, one sees that these graphs are xgraphs as defined above (apart from the labelling on edges). 
The operadic composition on $\XGra$ has been defined so that it agrees with the operadic composition on $\End(\Tpoly^{(n)})^{GL(n)}$.

Note that for finite $n$ not all xgraphs yield nonzero elements of $\End(\Tpoly^{(n)})^{GL(n)}$. Instead 
\[
\End(\Tpoly^{(n)})^{GL(n)} \cong \XGra/I_n
\]
where $I_n$ is an operadic ideal spanned by (linear combinations of) xgraphs acting as zero. Note that $\lim_\to I_n=0$. Hence we arrive at the following alternative definition of $\XGra$.

\vspace{2mm}
{\bf Alternative Definition:}
\[
\XGra = \lim_\leftarrow  \End(\Tpoly^{(n)})^{GL(n)}.
\]
\vspace{2mm}

From this alternative definition it is also clear that there is a map of operads $\dGra\to \XGra$, because $\dGra$ acts on $\Tpoly$ in a $GL(\infty)$ invariant way.

\subsection{The action on polyvector fields}
The action of the operad $\dGra$ on $\Tpoly^{(n)}$ factors through $\dGra\to \XGra$. The action of $\XGra$ on $\Tpoly^{(n)}$ is clear from the alternative definition in the previous subsection. Let us nevertheless describe it combinatorially for completeness. The action of an xgraph $\Gamma \in \XGra$ with $N$ input vertices on polyvector fields $\gamma_1,\dots, \gamma_N\in \Tpoly^{(n)}$ is given as follows. 
\begin{enumerate}
\item If $\Gamma$ has a special tadpole, then 
\[
\Gamma(\gamma_1,\dots, \gamma_N) = \pm\left(\sum_{j=1}^n x^j\xi_j\right)\wedge \Gamma'(\gamma_1,\dots, \gamma_N)
\]
where $\Gamma'$ is obtained from $\Gamma$ by deleting the special tadpole. The sign is ``+'' if the special tadpole is the first in the ordering of edges. So the special tadpole indicates multiplication with the Euler vector field.
\item Suppose that $\Gamma$ has no special tadpole. Suppose also that the $\gamma_1,\dots, \gamma_N$ are of homogeneous degree in the $x$ variables  and $\xi$ variables separately. Say $\gamma_j$ has $x$-degree $k_j$ and $\xi$-degree $l_j$. Let $\tilde \Gamma$ be the graph in $\dGra$ obtained by deleting the vertex $\vout$ and all adjacent edges. Then 
\[
\Gamma(\gamma_1,\dots, \gamma_N) 
=
\begin{cases}
\left(\prod_{j=1}^N k_j!l_j! \right)  \tilde \Gamma(\gamma_1,\dots, \gamma_N)
& \quad \text{if condition $(*)$ below is satisfied} \\
0 & \quad \text{otherwise}
\end{cases}
\]
Here condition $(*)$ is that the number of outgoing edges at vertex $j$ is $l_j$ and the number of incoming edges is $k_j$ for all $j=1,2,\dots, N$.
\end{enumerate}

By this action we automatically obtain a map of dg Lie algebras
\[
\XGC \to C(\Tpoly^{(n)},\Tpoly^{(n)}).
\]

Note that $\XGra$ in our definition does not act on the stable version $\Tpoly$ because the Euler vector field is not contained in it. However, the suboperad $\XGra^{nt}\subset \XGra$ of xgraphs without special tadpoles acts 
on $\Tpoly$. In particular, we get a map of dg Lie algebras 
\[
\XGC^{nt} \to C(\Tpoly,\Tpoly).
\]
where $\XGC^{nt}\subset \XGC$ is spanned by graphs without special tadpoles.

\section{The cohomology of \texorpdfstring{$\fXGC$ and $\XGC$}{fXGC and XGC}}
For an xgraph $\Gamma$ we define its set of connected components as the set of connected components of the graph obtained by deleting the vertex $\vout$. For example, the following graph has three connected components in this sense.
\[
\begin{tikzpicture}[every edge/.style={draw, -triangle 60}, scale=.8]
\node [xst, label=0:{$\vout$}] (v2) at (0,0) {};
\node [int] (v1) at (-2,-1) {};
\node [int] (v4) at (-2,-2) {};
\node [int] (v3) at (-1,-2) {};
\node [int] (v5) at (1,-2) {};
\node [int] (v6) at (2,-2) {};
\draw  (v1) edge (v2);
\draw  (v2) edge[loop, looseness=15] (v2);
\draw  (v3) edge (v4);
\draw  (v1) edge (v3);
\draw  (v4) edge (v1);
\draw  (v2.base) edge (v4);
\draw  (v5) edge[bend left] (v6);
\draw  (v6) edge[bend left] (v5);
\draw  (v2.base) edge (v5);
\end{tikzpicture}
\]
In particular, a special tadpole counts as one connected component. Note that the differential on $\fXGC$ leaves invariant the connected components.Hence the following result is immediate.
\begin{lemma}
\label{lem:reduction}
 \[
  H(\fXGC) \cong \mathbf{S}( H(\XGC )[-2])[2].
 \]
\end{lemma}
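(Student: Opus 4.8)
The plan is to realize $\fXGC$ as the completed symmetric product of the complex $\XGC$ and then to invoke the Künneth theorem over our characteristic-zero ground field.

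First I would make the connected-component decomposition explicit as an isomorphism of graded vector spaces. Given an xgraph $\Gamma$, deleting $\vout$ partitions its input vertices into connected pieces via the normal edges; together with the special edges attaching them to $\vout$, and counting each special tadpole as its own piece, this produces a well-defined multiset of connected \emph{atoms}. Each atom, regarded as an xgraph in its own right, is exactly a generator of $\XGC$ (connected and not the bare vertex $\vout$), and conversely any multiset of such atoms can be glued along the common output vertex to reconstruct an arbitrary xgraph. Since the atoms are unordered, this identification should be precisely that of the completed symmetric product, giving
\[
\fXGC \cong \mathbf{S}(\XGC) = \prod_{k\geq 0} \mathrm{Sym}^k(\XGC),
\]
where $\mathrm{Sym}^k(\XGC)$ is the summand consisting of xgraphs with exactly $k$ atoms and $\mathrm{Sym}^0 = \R$ corresponds to the bare vertex $\vout$.

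Second I would verify that this is an isomorphism of complexes, i.e.\ that the identification above intertwines the differential on $\fXGC$ with the differential induced on $\mathbf{S}(\XGC)$. Both terms of $\delta$ in \eqref{equ:diffpict} are local modifications: splitting a vertex and reconnecting its edges keeps that vertex's atom connected, and detaching an edge from $\vout$ onto a fresh vertex either attaches the new vertex to the atom containing the edge's other endpoint or, in the case of a special tadpole, simply converts that tadpole atom into a one-vertex atom. In every case the total number of atoms is preserved and no two atoms are merged. Hence $\delta$ respects the grading by $k$ and restricts on $\mathrm{Sym}^k(\XGC)$ to the standard induced differential $\sum_i 1\otimes\cdots\otimes\delta\otimes\cdots\otimes 1$; equivalently, $\delta$ is a derivation for the gluing-at-$\vout$ product. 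This is the content of the assertion, quoted just before the statement, that the differential leaves the connected components invariant.

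Finally I would pass to cohomology. Over a field of characteristic zero one has $\mathrm{Sym}^k(-) = \bigl((-)^{\otimes k}\bigr)^{\mathbb{S}_k}$ and the functor of $\mathbb{S}_k$-invariants is exact, so the ordinary Künneth isomorphism $H(V^{\otimes k}) \cong H(V)^{\otimes k}$ descends to $H(\mathrm{Sym}^k(\XGC)) \cong \mathrm{Sym}^k(H(\XGC))$; assembling over all $k$ and matching the completion in $\mathbf{S}$ with the product defining $\fXGC$ yields $H(\fXGC) \cong \mathbf{S}(H(\XGC))$. The main obstacle I anticipate is the sign bookkeeping in the first step: one must check that permuting two connected atoms produces exactly the graded-symmetric (Koszul) signs of $\mathbf{S}(\XGC)$, with no extra sign coming from the operadic degree shift or the edge orientations that would instead turn the product into an exterior one. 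The $\Lie\{1\}$ convention together with the edge-degree signs is designed so that this works out, and confirming it — along with checking that the exactness of $\mathbb{S}_k$-invariants and the Künneth map are compatible with the completions involved — is the only genuinely nontrivial point, the component-preservation of $\delta$ being clear from the pictorial description of the differential.
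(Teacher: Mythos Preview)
Your proposal is correct and is precisely the argument the paper has in mind: the paper merely records that the differential leaves the connected components invariant and declares the lemma ``immediate,'' and what you have written is the natural unpacking of that one-line proof --- decompose $\fXGC$ as $\mathbf{S}(\XGC)$ via the connected pieces after deleting $\vout$, observe $\delta$ acts as a derivation, and invoke K\"unneth plus exactness of $\mathbb{S}_k$-invariants in characteristic zero. The only quibble is your identification of $\mathrm{Sym}^0=\R$ with the bare vertex $\vout$: depending on whether the deformation complex is taken to start at $n\geq 1$ this may be off by a copy of $\R$, but that is a convention issue and does not affect the method.
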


So let us focus on $\XGC$.  
The goal of this section is to show the following result.
\begin{prop}
\label{prop:XGC}
\[ 
H(\XGC)\cong  H(\dGC) \oplus \R S.  
\]
\end{prop}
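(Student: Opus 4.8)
The proposition claims that $H(\XGC) \cong H(\dGC) \oplus \R S$, where $\dGC$ is the directed graph complex (connected, all vertices at least 2-valent) and $S$ is the special "Euler degree" generator. The key structural difference between $\XGC$ and $\dGC$ is the presence of the output vertex $\vout$, with its special edges (degree 0) and special tadpoles (degree +1). My plan is to set up a filtration/spectral-sequence argument that peels off the output-vertex data, reducing $\XGC$ to $\dGC$ plus a single extra class.

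**The proof strategy.**

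First I would introduce a filtration on $\XGC$ by the number of edges adjacent to $\vout$ (equivalently, counting special edges and special tadpoles, i.e., by $k_2 + k_3$, or perhaps a refinement thereof). Examining the differential $\delta$ in \eqref{equ:diffpict}, it has two pieces: the "vertex-splitting" term (which is exactly the differential of $\dGC$ acting on the part of the graph away from $\vout$) and the term that detaches an edge from $\vout$ and creates a new valence-one vertex. The plan is to choose the filtration so that, on the associated graded, the leading part of the differential is the piece that interacts with $\vout$, and then run the spectral sequence of this filtration. The first page should kill almost everything connected to $\vout$: I expect that on the associated graded we can contract the edges to $\vout$ and acyclicity arguments (in the style of the usual "vertices of valence one/two are killed" lemmas for graph complexes) will force the surviving classes to have $\vout$ either completely disconnected or attached in exactly one rigid configuration.

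The heart of the argument is identifying what survives. I anticipate three contributions on the relevant page. The first is the piece where no edge touches $\vout$: these are precisely the graphs of $\dGC$ (viewed inside $\XGC$ via the inclusion $\dGC \to \XGC$ noted in the text), contributing $H(\dGC)$. The second is the special-tadpole contribution: a single special tadpole at $\vout$ corresponds, via the action described in Section on the action on polyvector fields, to multiplication by the Euler vector field $\sum_j x^j \xi_j$; this should produce exactly the class $S$, whose action multiplies a homogeneous polynomial of bidegree $(r_1,r_2)$ by $(r_1 + r_2 - 2)$. The third potential contribution — configurations with special edges (degree 0) connecting input vertices to $\vout$ — must turn out to be acyclic. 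I would show this by a contracting homotopy: the detaching-and-reattaching part of $\delta$ pairs up graphs with $k_2$ and $k_2 \pm 1$ special edges, and one sets up an explicit homotopy that moves edges on and off $\vout$, exhibiting the subcomplex with at least one special edge (and no special tadpole) as acyclic.

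**The main obstacle.** I expect the hardest step to be the acyclicity of the special-edge sector and the correct bookkeeping of signs and symmetry factors there. The degree assignments (special edges degree $0$, special tadpoles degree $+1$, normal edges degree $-1$) together with the $\mathbb{S}_k$-antisymmetrization on edge labels make the homotopy delicate: one must verify that detaching an edge from $\vout$ and reattaching it to a fresh vertex, summed over the exponential "$I$"-prefactor in \eqref{equ:Idef}, genuinely squares to something controllable and that the homotopy operator is well-defined on the symmetrized, quotiented space. A secondary subtlety is ruling out unexpected interactions between multiple special tadpoles or between a special tadpole and special edges; I would argue that the special tadpole, being of degree $+1$ and central (it simply multiplies by the Euler field and commutes past everything), splits off as a tensor factor, so that the only surviving tadpole contribution is the single class $S$ and no higher symmetric powers of it appear inside the connected complex $\XGC$. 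Once the spectral sequence collapses on this page — which I expect for degree/valence reasons, since the surviving $\dGC$-part and the isolated $S$ sit in a single column — the isomorphism $H(\XGC) \cong H(\dGC) \oplus \R S$ follows, with the direct sum (rather than a symmetric product) reflecting the connectedness constraint defining $\XGC$.
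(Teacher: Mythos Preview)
Your overall strategy---filter $\XGC$ by the valence of $\vout$ and analyze the associated graded---is exactly what the paper does. But two of your three ``expected contributions'' are misidentified, and this matters.

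First, the class $S$ does \emph{not} come from a special tadpole. Look at the definition: $S=U-2I$ is a series in $\XGra(1)$ with one input vertex and \emph{no} tadpoles; its leading term in the $\vout$-valence filtration is $-2$ times the graph consisting of a single isolated input vertex (no edges at all). Moreover, the special tadpole at $\vout$ corresponds to \emph{wedge product} with the Euler vector field, not to scalar multiplication by $r_1+r_2-2$; these are different operations in different degrees. In the paper's argument the special-tadpole core produces a four-dimensional \emph{acyclic} subcomplex and contributes nothing.

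Second, the sector with no edges at $\vout$ is not $\dGC$: it contains all connected directed graphs, including those with vertices of valence $0$ or $1$. The leading differential $\delta_1$ (the piece of $\delta$ that decreases $\vout$-valence by one) detaches one special edge and creates a new univalent input vertex; hence the image $\delta_1(\mathit{gr}^1)$ inside $\mathit{gr}^0$ is exactly the span of graphs containing a valence-$1$ vertex. The first-page cohomology is therefore $\mathit{gr}^0/\delta_1(\mathit{gr}^1)$, i.e.\ connected graphs with no special edges and no valence-$1$ vertices. That space is $\dGC$ \emph{plus} the one-dimensional span of the isolated vertex, and it is the latter that receives $S$. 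The paper organizes the acyclicity statement for the remaining filtration pieces via the ``core'' of an xgraph (collapse all valence-$1$ vertices onto $\vout$): $\delta_1$ preserves the core, and for each fixed core with a special edge the subcomplex factors as $(\bigotimes_v C_v)^G$ with each $C_v$ a one-, two-, or four-dimensional complex that is visibly acyclic whenever $v$ is adjacent to $\vout$. Your proposed ``contracting homotopy that moves edges on and off $\vout$'' is aiming at the same target, but as stated it is not sharp enough to see why only the valence-$0$ (and not valence-$1$) part survives, nor why the tadpole sector dies rather than producing $S$.
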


Here the inclusion $\dGC\to \XGC$ appeared already in section \ref{sec:xgraphs}.
The last element $S$ is defined as
\[
S = U - 2I
\]
where $I\in \XGra(1)$ is the unit element (see \eqref{equ:Idef}) and $U$ is the following series of graphs:

\[
\begin{tikzpicture}
[
arr/.style={-triangle 60},scale=.9]

\node [int] (v1) at (-0.5,1) {};
\node [int] (v4) at (0.3,1) {};
\node [int] (v6) at (1.6,1) {};
\node [int] (v8) at (5.4,1) {};
\node [xst] (v2) at (-0.5,2) {};
\node [xst] (v3) at (0.3,2) {};
\node [xst] (v5) at (1.6,2) {};
\node [xst] (v7) at (5.4,2) {};
\node at (4.7,1.5) {$+\frac{3}{3!}$};
\node at (-1,1.5) {$=$};
\node at (-1.5,1.5) {$U$};
\node at (-0.1,1.5) {$+$};
\node at (2.1,1.5) {$+\, 2$};

\node at (0.9,1.5) {$+\, \frac{2}{2!}$};
\node at (3.3,1.5) {$+\, \frac{2}{2!}$};
\node at (6.2,1.5) {$+ \cdots$};
\draw[arr] (v1) edge (v2);
\draw[arr] (v3) edge (v4);
\draw[arr] (v5) edge[bend right] (v6);
\draw[arr] (v5) edge[bend left] (v6);
\draw[arr] (v7) edge[bend left] (v8);
\draw[arr] (v7) edge[bend right] (v8);

\node [xst] (v5) at (2.7,2) {};
\node [int] (v6) at (2.7,1) {};

\draw[arr] (v5) edge[bend right] (v6);
\draw[arr] (v6) edge[bend right] (v5);

\node [xst] (v5) at (4,2) {};
\node [int] (v6) at (4,1) {};

\draw[arr] (v6) edge[bend left] (v5);
\draw[arr] (v6) edge[bend right] (v5);

\draw[arr] (v7) edge (v8);

\node at (8.1,1.5) {``$=$''$\, \left.\frac{d}{d\lambda}\right|_{\lambda=1} \exp\, \lambda$};
\begin{scope}[shift={(9.4,0)}]
\node [int] (v1) at (1,1) {};
\node [int] (v4) at (2,1) {};
\node [xst] (v2) at (1,2) {};
\node [xst] (v3) at (2,2) {};
\draw[arr] (v1) edge (v2);
\draw[arr] (v3) edge (v4);
\node at (1.5,1.5) {$+$};
\node[scale=1.5] at (.4,1.5) {\huge $($};
\node[scale=1.5] at (2.5,1.5) {\huge $)$};
\end{scope}

\end{tikzpicture}
\]
in $\XGra(1)$. The operation $U$ corresponds to the scaling operator, acting on multivector fields of degree (joint in $x$'s and $\xi$'s) $k$ as multiplication by $k$.
Let us prove the proposition above. In fact, we will prove a somewhat stronger statement.

\vspace{2mm}
{\bf Claim 1:} The inclusion $\dGC \oplus \R S \to \XGC$ is a quasi-isomorphism.
\vspace{2mm}

To prove the claim, consider the descending complete filtration on $\XGC$ by the valence of $\vout$. 
\[
 \XGC= \mathcal F^0 \supset \mathcal F^1  \supset \mathcal F^2 \supset \cdots 
\]
where $\mathcal F^p$ is spanned by graphs for which $\vout$ has valence $\geq p$. The filtration is not compatible with the differential in the usual sense, but $\delta \mathcal F^p \subset \mathcal F^{p-1}$. Consider the associated   
graded. The leading part of the differential is 
\[
\delta_1: \mathcal F^p/\mathcal F^{p+1} \to \mathcal F^{p-1}/\mathcal F^{p}.
\]
This differential reduces the valence of $\vout$ by one. Considering 
``equation'' \eqref{equ:diffpict}, we see that the only part of $\delta$ that can achieve this is the following:
\[
\begin{tikzpicture}[every edge/.style={draw, -triangle 60}]
\node [xst] (v1) at (-2.5,1) {};
\node [xst] (v6) at (0.5,1) {};
\node [int] (v2) at (-2.5,-1) {};
\node [int] (v7) at (0.5,-1) {};
\node [int] (v8) at (-0.5,0) {};
\draw  (v1) edge[bend left] (v2);
\draw  (v2) edge[bend left] (v1);
\node at (-3.5,0) {$\delta_1$};
\node at (-1.5,0) {$=\, \sum$};
\draw  (v1) edge (v2);
\draw  (v6) edge (v7);
\draw  (v8) edge (v7);
\draw  (v7) edge[bend right] (v6);
\foreach \y in {.25,0,-.25}
{
\draw (v2) -- +(-.4, \y);
\draw (v2) -- +(.4, \y);
\draw (v7) -- +(-.4, \y);
\draw (v7) -- +(.4, \y);
}
\end{tikzpicture}
\]
So $\delta_1$ detaches one special edge from $\vout$ and connects it to a new vertex.
 Note that the same happens for special tadpoles:
\[
\begin{tikzpicture}[every edge/.style={draw, -triangle 60}]
\node [xst] (v1) at (-2.5,1) {};
\node [xst] (v6) at (-0.5,1) {};
\node [int] (v8) at (-0.5,2) {};
\node at (-3.5,1) {$\delta_1$};
\node at (-1.5,1) {$=$};
\begin{scope}[shift={(0.5,0)}]
\node [xst] (v9) at (0.5,1) {};
\node [int] (v10) at (0.5,2) {};
\end{scope}

\foreach \y in {.25,0,-.25}
{
\draw (v1.base) -- +(\y, -.5);
\draw (v6.base) -- +(\y, -.5);
\draw (v9.base) -- +(\y, -.5);
}
\draw  (v1) edge[loop, looseness=15] (v1);
\draw  (v8) edge (v6);

\draw  (v9.base) edge (v10);
\node at (0.2,1) {+};
\end{tikzpicture}
\]
Concretely, $\delta_1$ has the form (on a graph $\Gamma$)
\[
 \delta_1 \Gamma = \sum_{e} \Gamma_{e}.
\]
Here the sum runs over all special edges $e$. Say $e$ connects some vertex $v\neq \vout$ to $\vout$ ($\vout$ to $v$). Then $\Gamma_{e}$ is obtained by (i) adding a new vertex $v'$ to $\Gamma$ and (ii) reconnect the edge $e$ so as to connect $v$ to $v'$ ($v'$ to $v$). Note that the reconnected edge has degree -1, while previously it had degree 0. The edge $e$ becomes the first in the list of edges (all other edge labels are shifted by one).
For $v=\vout$, i.~e., for special tadpoles, the contribution consists of two terms as depicted above.

\vspace{2mm}
{\bf Claim 2:} The cohomology of the associated graded with differential $\delta_1$ is 
\[
H(\mathit{gr}\XGC, \delta_1)= \mathit{gr}^0\XGC / \delta_1 \mathit{gr}^1\XGC
=
\mathcal F^0 / (\mathcal F^1 + \delta_1 \mathcal F^1 )
\]
\vspace{2mm}

To see this one uses a combinatorial argument along the lines of the proof of Proposition 3 in \cite{megrt}, see also \cite{Kformal}. Let the \emph{core} of an xgraph be the isomorphism class of xgraphs obtained by identifying all valence 1 vertices with $\vout$.\footnote{An isomorphism of an xgraph is a permutation of the edge and vertex labels that maps the edge set to itself.}
Here is an example of an xgraph (left, labels not shown) and its core (right).
\[
\begin{tikzpicture}[every edge/.style={draw, -triangle 60}]
\node [xst] (v1) at (-2,1) {};
\node [xst] (v5) at (1,1) {};
\node [int] (v2) at (-2,-1) {};
\node [int] (v3) at (-3,-1) {};
\node [int] (v6) at (1,-1) {};
\draw  (v1) edge[bend left] (v2);
\draw  (v3) edge (v2);
\draw  (v2) edge[bend left] (v1);
\draw  (v5) edge[bend left] (v6);
\draw  (v5) edge[bend right] (v6);
\draw  (v6) edge (v5);
\end{tikzpicture}
\]

The differential $\delta_1$ cannot change the core. I.~e., if it acts on a graph $\Gamma$, the result is a sum of graphs with the same core. 
It follows that $(\mathit{gr}\XGC, \delta_1)$ splits into subcomplexes of graphs having the same cores. Fix one such core. If the core has no input vertex, there are only two possibilities (by connectedness): (i) the core is the ``empty graph'' consisting of only the vertex $\vout$, or (ii) the core  
consists of the vertex $\vout$ and one tadpole at $\vout$. For case (i) there is (potentially) only one graph with that core, namely the empty graph itself. But this graph is excluded from $\XGC$ by definition. For case (ii) the subcomplex corresponding to that core is four dimensional. The four graphs generating it are depicted below. It is clear that the resulting complex is acyclic.

\[
\begin{tikzpicture}[every edge/.style={draw, -triangle 60}, scale=.7]

\node[xst] (v2) at (-5,0.5) {};
\node[xst] (v3) at (-1,0.5) {};
\node[xst] (v5) at (1.8,2) {};
\node[xst] (v10) at (1.8,-0.6) {};
\node[xst] (v11) at (5.4,1.2) {};
\node [int] (v9) at (2.8,-0.6) {};
\node [int] (v7) at (2.8,2) {};
\node at (-5,3) {the core};
\node at (2.2,3) {the subcomplex};
\draw [-angle 60](0,1.5) -- (1,2);
\draw [-angle 60](3.5,2) -- (4.5,1.5);
\draw [-angle 60](0,0) -- (1,-0.5);
\draw [-angle 60] (3.5,-0.5) -- (4.5,0);
\node at (4.1875,-0.7067) {$\delta_1$};
\node at (0.3138,2.0396) {$\delta_1$};
\node at (0.1981,-0.62) {$\delta_1$};
\node at (4.1008,2.1263) {$\delta_1$};

\draw [->] (v2) edge[loop, looseness=15] (v2);
\draw [->] (v3) edge[loop, looseness=15] (v3);
\draw [->] (v7) edge (v5);
\draw [->] (v10.base) edge (v9);
\node [int] (v1) at (5,0.4) {};
\node [int] (v4) at (5.8,0.4) {};
\draw  (v1) edge (v4);
\end{tikzpicture}
\]

Next consider a core with at least one input vertex.
Let us consider the subcomplex corresponding to this core. Focus on one vertex of the core, that is connected by $k$ incoming edges and $l$ outgoing edges to $\vout$. If $k,l>0$ graphs with this fixed core locally have one of the following four shapes:
\[
\begin{tikzpicture}[every edge/.style={draw, -triangle 60}, scale=.8]

\node[xst] (v2) at (-4,1.5) {};
\node[xst] (v3) at (-1,1.5) {};
\node[xst] (v5) at (2,3) {};
\node[xst] (v10) at (2,0) {};
\node[xst] (v11) at (5.5,1.5) {};
\node [int] (v1) at (-4,0) {};
\node [int] (v4) at (-1,0) {};
\node [int] (v6) at (2,1.5) {};
\node [int] (v8) at (2,-1.5) {};
\node [int] (v12) at (5.5,0) {};
\node [int] (v14) at (4.5,0.5) {};
\node [int] (v13) at (6.5,0.5) {};
\node [int] (v9) at (1,-1) {};
\node [int] (v7) at (3,2) {};

\draw  (v1) edge[bend left] (v2);
\draw  (v1) edge[bend left=60] (v2);
\draw  (v1) edge (v2);
\draw  (v2) edge[bend left] (v1);
\draw  (v2) edge[bend left=60] (v1);

\draw  (v4) edge[bend left] (v3);
\draw  (v4) edge[bend left=60] (v3);
\draw  (v4) edge (v3);
\draw  (v3) edge[bend left] (v4);
\draw  (v3) edge[bend left=60] (v4);

\draw  (v6) edge[bend left] (v5);
\draw  (v6) edge[bend left=60] (v5);
\draw  (v6) edge (v5);
\draw  (v5) edge[bend left] (v6);

\draw  (v8) edge[bend left] (v10);
\draw  (v8) edge (v10);
\draw  (v10) edge[bend left] (v8);
\draw  (v10) edge[bend left=60] (v8);

\draw  (v12) edge[bend left] (v11);
\draw  (v12) edge (v11);
\draw  (v11) edge[bend left] (v12);

\draw  (v7) edge (v6);
\draw  (v8) edge (v9);

\draw  (v13) edge (v12);
\draw  (v12) edge (v14);
\node at (-4,2) {the core};
\node at (2,3.5) {the subcomplex};
\draw [-angle 60](0,1.5) -- (1,2);
\draw [-angle 60](3.5,2) -- (4.5,1.5);
\draw [-angle 60](0,0) -- (1,-0.5);
\draw [-angle 60] (3.5,-0.5) -- (4.5,0);
\node at (4.1875,-0.7067) {$\delta_1$};
\node at (0.3138,2.0396) {$\delta_1$};
\node at (0.1981,-0.62) {$\delta_1$};
\node at (4.1008,2.1263) {$\delta_1$};

\foreach \y in {.25,0,-.25}
{
\draw (v4.base) -- +(\y, -.5);
\draw (v8.base) -- +(\y, -.5);
\draw (v6.base) -- +(\y, -.5);
\draw (v12.base) -- +(\y, -.5);
\draw (v1.base) -- +(\y, -.5);
}
\end{tikzpicture}
\]
In case $k=0$ the top right two graphs are missing, if $l=0$ the bottom right two graphs are missing, and if $k=l=0$ the three rightmost graphs are absent from this picture. 


Now the subcomplex of xgraphs with a fixed core has the form
\[
 ( \otimes_v C_v )^G
\]
where $G$ is the symmetry group of the core. The tensor product runs over input vertices in (one representative of) the core and the $C_v$ are relatively simple complexes associated to the vertices. Concretely $C_v$ is of dimension 1,2 or 4, corresponding to the up to 4 graphs occurring in the above drawing. It is clear that $C_v$ is acyclic if the vertex $v$ in the core has special edges (i.e., edges to $\vout$) attached to it.
In case $v$ has no such edges, $c_v$ is one dimensional, hence the cohomology is also one dimensional.
Hence only graphs without special edges can contribute nontrivial cohomology classes of $(\mathit{gr}\XGC, \delta_1)$.
This shows Claim 2. 

Finally let us deduce Claim 1 from Claim 2. From Claim 2 and the completeness of the filtration it follows that the projection $\XGC\to \mathit{gr}^0\XGC / \delta_1 \mathit{gr}^1\XGC$ is a quasi-isomorphism.
By what is said above, the space $\delta_1 \mathit{gr}^1\XGC$ is composed of all graphs in $\mathit{gr}^0\XGC$ that have valence 1 vertices.
Hence the composition of maps of complexes
\[
\dGC \oplus \R S \to \XGC \to \mathit{gr}^0\XGC / \delta_1 \mathit{gr}^1\XGC
\]
is an isomorphism. This shows Claim 1 and the Proposition.
\hfill \qed

\section{The proof of Theorem \ref{thm:main}}
\label{sec:proof}
The proof of Theorem \ref{thm:main} is more or less a copy of M. Kontsevich's proof of Theorem \ref{thm:konts}, plus one additional combinatiorial step. Note that the general linear Lie (or $\Lie\{1\}$) algebras act on the Chevalley complex $C(\Tpoly, \hatTpoly)$ of $\Tpoly$, and on the sub-complexes $C(\Tpoly, \tildeTpoly)$, $C(\Tpoly, \Tpoly)$. Let us focus on the first of the three spaces to begin with. It is a projective limit of spaces
\[
V_n := C(\Tpoly^{(n)}, \hatTpoly^{(n)})
\]
on which $GL(n)$ acts in a reductive manner. Furthermore the action of $\gl_n$ is trivial on cohomology since it is given by a Cartan type formula ($L_x = d\circ \iota_x + \iota_x \circ d$, where $d$ is the Chevalley differential, $x\in \mathfrak{gl}_\infty$, $L_x$ is the action of $x$ and $\iota_x$ is the ``insertion operation'').
It follows that the cohomology of $V_n$ is the same as that of its invariant part $H(V_n)= H(V_n^{GL(n)})=H(V_n)^{GL(n)}$.
By classical invariant theory the elements in $V_n^{GL(n)}$ can be identified with certain graphs, which can be checked to be xgraphs, see also section \ref{sec:altdef}.
Hence $V_n$ is quasi-isomorphic to a certain quotient of $\fXGC$. It is a quotient, not the full space $\fXGC$, since due to finite dimensions some graphs act as zero. Let us call this quotient $\fXGC^{(n)}$.
We know that:
\begin{enumerate}
\item The maps $V_{n+1}\to V_n$ are surjective.
\item The maps $\fXGC^{(n)}\to V_n$ are quasi-isomorphisms.
\item The maps $\fXGC^{(n+1)}\to \fXGC^{(n)}$ are surjective.
\end{enumerate}
By the second assertion the mapping cones $\MC(\fXGC^{(n)}\to V_n)$ are acyclic for all $n$. By assertions 1 and 3 the mapping cones form a tower, with all morphisms surjective:
\[
\cdots \leftarrow \MC(\fXGC^{(n)}\to V_n) \leftarrow \MC(\fXGC^{(n+1)}\to V_{n+1})\leftarrow \cdots.
\]
Hence by \cite[Theorem 3.5.8]{weibel}, we can conclude that 
\begin{multline*}
0 = \lim_\leftarrow H(\MC(\fXGC^{(n)}\to V_n))
= H(\lim_\leftarrow \MC(\fXGC^{(n)}\to V_n))
\\
= H(\MC(\lim_\leftarrow \fXGC^{(n)}\to \lim_\leftarrow V_n)).
\end{multline*}
In other words the map 
\[
\lim_\leftarrow \fXGC^{(n)}\to C(\Tpoly, \hatTpoly)
\]
is a quasi-isomorphism.
But $\lim_\leftarrow  \fXGC^{(n)}$ is easily checked to be $\fXGC$. (Any xgraph acts non-trivially on some $\Tpoly^{(n)}$ for high enough $n$. The $n$ depends on the xgraph.)
Hence one of the three equalities of Theorem \ref{thm:main} then follows from Lemma \ref{lem:reduction} and Propositions \ref{prop:XGC} and \ref{prop:dGC}. 

For the sub-complex $C(\Tpoly, \tildeTpoly)$ the same argument goes through, one just has to replace $\hatTpoly^{(n)}$ by $\tildeTpoly^{(n)}$.
\hfill \qed

\begin{rem}
There is an alternative approach to the proof of Theorem \ref{thm:main} suggested by the anonymous referee of this article. First, one may show that the completed module of polyvector fields $\hatTpoly(\R^n)$ may be identified with the coinduced representation $\mathit{Coind}_{U( \Tpoly^{\geq 2}(\R^n))}^{U(\Tpoly^{\geq 1}(\R^n))}\R$, where $\Tpoly^{\geq 1}(\R^n)=\Tpoly(\R^n)/\R$. Then one can apply Shapiro's Lemma to see that $H(\Tpoly^{\geq 1}(\R^n), \hatTpoly(\R^n) )\cong H(\Tpoly^{\geq 2}(\R^n),\R )$. 
One may use M. Kontsevich's Theorem \ref{thm:konts} to compute $H(\Tpoly^{\geq 2}(\R^n),\R )$ in the stable limit.
Finally one may relate $H(\Tpoly^{\geq 1}(\R^n), \hatTpoly(\R^n) )$ and $H(\Tpoly(\R^n), \hatTpoly(\R^n) )$ with some extra work, leading to the appearance of the additional classes $S$ in Theorem \ref{thm:main}.
\end{rem}

\section{The cohomology of \texorpdfstring{$\Tpoly$}{Tpoly}.}
\label{sec:TpolyTpolycohom}
Let us consider the complex $C(\Tpoly, \Tpoly)$. Unfortunately, its cohomology is more subtle than that occurring in Theorem \ref{thm:main}. For example, consider a sequence of polynomials $p_1, p_2,\dots \in \Tpoly$ with $p_j\in \R[x_j,\dots, \xi_j, \dots]$ for $j=1,2,\dots$. Then the assignment
\[
\gamma \mapsto \sum_j \co{p_j}{\gamma}
\]
is well defined (only finitely many terms in the sum are nonzero for fixed $\gamma$) and defines a 1-cocyle in $C^1(\Tpoly,\Tpoly)$. In general it represents a non-trivial cohomology class if the sequence $\{p_n\}_n$ is infinite. Another sequence $\{p_n'\}_n$ of the above form gives rise to the same cohomology class if $\{p_n\}_n$ and $\{p_n'\}_n$ eventually agree, i.~e., if there is an $N$ such that $p_n=p_n'$ for $n>N$. In particular, the cohomology classes of this form are invariant under the action of $GL(\infty)$, though there is (in general) no $GL(\infty)$ invariant representative. Note however that two sequences can give rise to the same cohomology class even if they do not eventually agree. For example, one may shift sequences or add and subtract the same term from some elements of the sequence.
\begin{rem}
Note also that the product of two cohomolgy classes of the above form is trivial. Concretely, for sequences $\{p_n\}_n$ and $\{q_n\}_n$ as above (of homogeneous degrees) the 2-cocyle 
\[
(\gamma,\nu)\mapsto 
\sum_{i,j} (-1)^{(|q_j|+1)|\gamma|}\co{p_i}{\gamma}\co{q_j}{\nu}
+\sum_{i,j} (-1)^{(|q_j|+1+|\gamma|)|\nu|} \co{p_i}{\nu}\co{q_j}{\gamma}
\]
is the coboundary (up to global sign) of the cochain
\[
\gamma \mapsto
\sum_{i\leq j} p_i\co{q_j}{\gamma}
- (-1)^{|q_j||p_i|+|p_i|+|q_j|}
\sum_{i>j} q_j\co{p_i}{\gamma}\, .
\]
\end{rem}

\begin{ex}
The operation 
\[
T \colon \gamma \mapsto \sum_j \co{x^j \xi_j}{\gamma}
\]
obtained by taking the bracket with the Euler vector field represents a cohomology class in $H^1(\Tpoly, \Tpoly)$. Note that the Euler vector field does not belong to $\Tpoly$.
\end{ex}

I claim that the cohomology of $C(\Tpoly, \Tpoly)$ is generated (under cup products and up to completion) by the graph cohomology $H(\GC)$, the wheels $W_j$, $j=1,2,\dots$, the element $S$ as in Theorem \ref{thm:main} and the classes coming from sequences of polynomials as above.

However, let us consider only the invariant part $C(\Tpoly, \Tpoly)^{GL(\infty)}$ here, which is probably of higher practical interest anyways.

\begin{prop}
\label{prop:GLinftytpoly}
The cohomology of $C(\Tpoly, \Tpoly)^{GL(\infty)}$ can be identified with the space
\[
\mathbf{S}( \R S[-2] \oplus \R T[-2] \oplus \bigoplus_{n\equiv1\,\mathrm{ mod }\,4} \R W_n[-2]  \oplus H(\GC )[-2])[2]\, .
\]
Here $T$ is as in the example above and the remainder of the notation is the same as in Theorem \ref{thm:main}.
\end{prop}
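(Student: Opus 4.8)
The plan is to run the argument of Theorem \ref{thm:main} with the module $\Tpoly$ in place of $\hatTpoly$, replacing the full extended graph complex $\fXGC$ by its subcomplex $\fXGC^{nt}$ of xgraphs without special tadpoles. The guiding observation is that a special tadpole encodes multiplication by the Euler vector field $E=\sum_j x^j\xi_j$, which lies in $\hatTpoly$ and in $\tildeTpoly$ but \emph{not} in $\Tpoly$; so precisely the special tadpoles must be discarded once the values are honest polynomials. The restriction to strict $GL(\infty)$-invariants (the ``cheat'') removes the non-invariant classes coming from sequences of polynomials, since those have no invariant representative, while it keeps $T=\ad_E$, which is invariant because $E$ is.

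First I would identify the invariant complex with a graph complex. By classical invariant theory, exactly as in Section \ref{sec:altdef}, a $GL(\infty)$-invariant cochain valued in $\hatTpoly$ is a series of xgraphs, and it is valued in $\Tpoly$ if and only if it contains no special tadpole. This gives $C(\Tpoly,\Tpoly)^{GL(\infty)}\cong\fXGC^{nt}$ (or, via the projective-limit argument of Section \ref{sec:proof}, a quasi-isomorphism), $\fXGC^{nt}$ being a subcomplex since the differential never creates special tadpoles. The connected-components argument of Lemma \ref{lem:reduction} then applies verbatim and reduces the problem to computing $H(\XGC^{nt})$, yielding $H(\fXGC^{nt})\cong\mathbf{S}(H(\XGC^{nt}))$.

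The heart of the proof is to show $H(\XGC^{nt})\cong H(\dGC)\oplus\R S\oplus\R T$ by rerunning the filtration argument of Proposition \ref{prop:XGC} (filtration by the valence of $\vout$, with leading differential $\delta_1$ detaching special edges). For every core the local complexes $C_v$ at the input vertices are unchanged, so cores with a special edge at an input vertex stay acyclic, and cores with no special edges still contribute $H(\dGC)$ together with the isolated-vertex class $S$. The only change occurs for the core consisting of $\vout$ with a single (would-be) tadpole: in $\XGC$ its local complex is the acyclic four-term complex $c_0\to\{c_1^{\mathit{out}},c_1^{\mathit{in}}\}\to c_2$ indexed by the number $0,1,2$ of detached ends, whereas in $\XGC^{nt}$ the top state $c_0$ (the genuine special tadpole) is deleted, leaving the truncated complex $\{c_1^{\mathit{out}},c_1^{\mathit{in}}\}\to c_2$. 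This truncation has exactly one surviving cohomology class, $\delta_1 c_0$, the valence-one graph with one special edge to a single new vertex. I would then check that this class is represented by $T=\ad_E$: in the $\hatTpoly$-complex $E$ (the special tadpole, a $0$-cochain) bounds $T$, so deleting it turns $T$ into a nontrivial cocycle. A short connectivity check rules out the a priori possible classes ``$T\otimes(\text{graph})$'': attaching the truncated-tadpole edge to any further component either produces an input vertex carrying a special edge (hence acyclic $C_v$) or disconnects the graph after deleting $\vout$ (hence it is not in $\XGC^{nt}$), so $T$ occurs only by itself. Completeness of the filtration upgrades this associated-graded computation to $H(\XGC^{nt})$, and Proposition \ref{prop:dGC} together with the previous step assembles the answer $\mathbf{S}(\R S\oplus\R T\oplus\prod_{n}\R W_n\oplus H(\GC))$.

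The main obstacle I expect is the bookkeeping in the last step: correctly handling how the notion of ``core'' degenerates in $\XGC^{nt}$ (a detached special edge to a univalent vertex would fold back into a forbidden tadpole), and proving that the truncated tadpole complex contributes exactly one class, identified on the nose with $\ad_E$, with no spurious higher classes. The identification $C(\Tpoly,\Tpoly)^{GL(\infty)}\cong\fXGC^{nt}$ also needs some care with completions, but this is entirely parallel to Section \ref{sec:proof} and should present no real difficulty.
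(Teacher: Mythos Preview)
Your overall strategy is sound and in fact complements the paper's argument nicely: you spend your effort on computing $H(\fXGC^{nt})$ by rerunning the core/filtration argument of Proposition~\ref{prop:XGC}, and your analysis of the truncated four-term tadpole complex (losing $c_0$ and gaining the single class represented by $\ad_E$) is exactly right. The paper, by contrast, takes the identity $H(\fXGC^{nt})=\mathbf{S}(\R S\oplus\R T\oplus\prod_n\R W_n\oplus H(\GC))$ essentially for granted (as an easy variant of the earlier computation) and devotes its entire proof to the step you dismiss as routine, namely the identification $C(\Tpoly,\Tpoly)^{GL(\infty)}\cong\fXGC^{nt}$.

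That last step is \emph{not} entirely parallel to Section~\ref{sec:proof}, and this is the one genuine gap in your proposal. In Section~\ref{sec:proof} the module is $\hatTpoly^{(n)}$ and the inverse limit of the graph quotients is simply $\fXGC$; no graphs get killed. Here the module is $\Tpoly$, and one has to write $C(\Tpoly,\Tpoly)^{GL(\infty)}=\lim_\leftarrow\bigl(\fXGC^{(n)}\hat\otimes\R[x_{n+1},\dots,\xi_{n+1},\dots]\bigr)$ and then argue that this inverse limit is exactly $\fXGC^{nt}$. The point is that the transition map sends $\Gamma\otimes\gamma_n+\Gamma'\otimes\nu_n$ (with $\Gamma'$ obtained from $\Gamma$ by deleting a special tadpole) to $\Gamma\otimes\gamma_n+\Gamma'\otimes((x^n\xi_n)\gamma_n+\nu_n)$, so a compatible family forces $\gamma_{n-1}=\gamma_n$ and $\nu_{n-1}=(x^n\xi_n)\gamma_n+\nu_n$; since $\sum_{k\ge n}x^k\xi_k\notin\R[x_n,\dots]$, this is solvable only with $\gamma_n=0$. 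This is precisely the mechanism by which ``$E\notin\Tpoly$'' kills special tadpoles, but it requires this explicit bigrading/inverse-limit bookkeeping rather than a direct appeal to Section~\ref{sec:proof}. Once you add this argument, your proof and the paper's together make a complete account.
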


\begin{rem}\label{rem:deg0}
 Note that in particular the degree 0 cohomology is spanned by $H(\GC )\cong \grt_1$ and by the classes $S$ and $T$.
\end{rem}

\begin{proof}[Proof of Proposition \ref{prop:GLinftytpoly}.]
Note that the cohomology in the Proposition is the cohomology of the subcomplex $\fXGC^{nt}\subset \fXGC$ spanned by graphs without special tadpoles. Hence it suffices to show that 
\[
C(\Tpoly, \Tpoly)^{GL(\infty)} \cong  \fXGC^{nt}\, .
\]
This follows essentially from classical invariant theory for $GL(\infty)$.
More precisely, let us write 
\[
C(\Tpoly, \Tpoly) = \lim_\leftarrow C(\Tpoly^{(n)}, \Tpoly).
\]
Then 
\[
C(\Tpoly, \Tpoly)^{GL(\infty)} = \lim_\leftarrow C(\Tpoly^{(n)}, \Tpoly)^{GL(n)}.
\]
Note that $C(\Tpoly^{(n)},\Tpoly^{(n)})$ carries an additional bigrading (as vector space) by the input and output degrees of maps. More precisely, the space of cochains with $r$ inputs can be written as 
\[
C^r(\Tpoly^{(n)},\Tpoly^{(n)}) = \prod_{d_i} \bigoplus_{d_o} C^r_{d_i,d_o}(\Tpoly^{(n)},\Tpoly^{(n)})
\]
where $C^r_{d_i,d_o}(\Tpoly^{(n)},\Tpoly^{(n)})$ is the subspace of chains that map homogeneous polynomials $\gamma_1,\dots, \gamma_r$ of total degree (number of $x$'s and $\xi$'s in all $\gamma$'s) $d_i$ to a polynomial of total degree $d_o$.
Each such subspace is finite dimensional since there are only finitely many possible monomials. There is a similar additional bigrading on $\fXGC$ and the map $\fXGC\to \Tpoly^{(n)}$ preserves the grading. In particular it follows that $\fXGC^{(n)}= \fXGC/I_n$, with $I_n$ the kernel of the previous map, inherits this bigrading. Concretely, a graph is of bidegree $(d_i, d_o)$ if the sum of valences of all input vertices is $d_i$ and the valence of the output vertex is $d_o$. We will call $\fXGC^{(n)}_{d_i, d_o}$ the space spanned by such graphs. 

Note furthermore that 
\[
C(\Tpoly^{(n)},\Tpoly) =  C(\Tpoly^{(n)},\Tpoly^{(n)})\hat \otimes \R[x_{n+1},\dots, \xi_{n+1}, \dots]
\]
for a suitably completed tensor product $\hat \otimes$ and hence
\begin{align*}
C(\Tpoly^{(n)},\Tpoly)^{GL(n)} 
&=  C(\Tpoly^{(n)},\Tpoly^{(n)})^{GL(n)}\hat \otimes \R[x_{n+1},\dots, \xi_{n+1}, \dots] \\
&= \fXGC^{(n)} \hat \otimes \R[x_{n+1},\dots, \xi_{n+1}, \dots].
\end{align*}
This reduces our goal to showing that 
\[
\lim_\leftarrow 
\fXGC^{(n)} \hat \otimes \R[x_{n+1},\dots, \xi_{n+1}, \dots]
= \fXGC^{nt}.
\]
Consider just the grading by input degrees. It allows us to reduce the statement further to showing that 
\[
\lim_\leftarrow 
\fXGC^{(n)}_{d_i} \hat \otimes \R[x_{n+1},\dots, \xi_{n+1}, \dots]
= \fXGC^{nt}_{d_i}
\]
where the subscripts indicate that we restrict to subspaces spanned by graphs with fixed input degree (the sum of valences of all input vertices) $d_i$. Note that for $d_i$ fixed and $n$ big enough $\fXGC^{(n)}_{d_i}= \fXGC_{d_i}$. It suffices to consider terms for $n$ big enough.
Consider a graph $\Gamma$ with special tadpole in $\fXGC_{d_i}$ and let $\Gamma'$ be the same graph with special tadpole deleted. For polynomials $\gamma_n, \nu_n\in \R[x_{n+1},\dots, \xi_{n+1}, \dots]$ the map 
\[
\fXGC^{(n)}_{d_i} \hat \otimes \R[x_{n+1},\dots, \xi_{n+1}, \dots]
\to 
\fXGC^{(n-1)}_{d_i} \hat \otimes \R[x_{n},\dots, \xi_{n}, \dots]
\]
maps
\[
\Gamma\otimes \gamma_n + \Gamma'\otimes \nu_n
\mapsto
\Gamma\otimes \gamma_n +
\Gamma'\otimes ((x^n\xi_n)\gamma_n +\nu_n)\, .
\]
Hence the coefficients $\gamma_n$, $\nu_n$ of graphs $\Gamma$, $\Gamma'$ for an element of the inverse limit must satisfy the equations (for $n\gg 0$)
\begin{align*}
\gamma_{n-1} &= \gamma_n\\
\nu_{n-1} &= (x^n\xi_n)\gamma_n +\nu_n\, .
\end{align*}
These equations are impossible to satisfy unless $\gamma_n=0$, since the element $\sum_{k\geq n} x^k\xi_k$ is not contained in $\R[x_{n},\dots, \xi_{n}, \dots]$.
In this case the collection $(\nu_n)_n$ has to define an element of $\lim_\leftarrow \R[x_{n},\dots, \xi_{n}, \dots]$, i.~e., the $\nu_n$ must be a constant. The claim of the Proposition follows.
\end{proof}

\section{Relation to stable formality morphisms}
\label{sec:discussion}
%
%
%

In this section we relate our results to the recent work by V. Dolgushev \cite{vasilystable} and indicate an alternative, more intrinsic proof of the main theorem of loc. cit. This alternative proof is already hinted at in some hidden form in \cite[section 4.2]{K2}.

\subsection{Automorphisms of \texorpdfstring{$\Tpoly$}{Tpoly}, and stable formality morphisms}
Let $\Dpoly^{(n)}$ be the space of polynomial polydifferential operators on $\R^n$. By the Hochschild-Kostant-Rosenberg (HKR) Theorem it is quasi-isomorphic to $\Tpoly^{(n)}$ as a complex. Since taking direct limits commutes with taking cohomology, we see that the stable version
\[
\Dpoly = \lim_\rightarrow \Dpoly^{(n)}
\]
is quasi-isomorphic to $\Tpoly$ as a complex.
By M. Kontsevich's Formality Theorem the two spaces are also quasi-isomorphic as $(\Lie\{1\})_\infty$ algebras.
One can consider the space $M$ of $(\Lie\{1\})_\infty$ quasi-isomorphisms 
$\Tpoly\to \Dpoly$ whose unary component is the Hochschild-Kostant-Rosenberg (HKR) morphism.

Let $\Aut_\infty(\Tpoly)$ be the group of $(\Lie\{1\})_\infty$ automorphisms of $\Tpoly$. Since $\Tpoly$ has zero differential, the leading component of any $(\Lie\{1\})_\infty$ automorphism is an honest $\Lie\{1\}$ algebra automorphism. Denote the kernel of the morphism  $\Aut_\infty(\Tpoly)\to \Aut(\Tpoly)$ by $G$. There is a natural right action of $G$ on $M$.

We may require one of the following additional properties from our stable formality morphisms or automorphisms:
\begin{itemize}
\item There are natural actions of $GL(\infty)$ on $\Tpoly$ and $\Dpoly$. We may ask the formality morphisms and automorphisms of $\Tpoly$ to be equivariant with respect to this action. We obtain the space $M^{GL}\subset M$ of equivariant formality morphisms, with a right action of the group $G^{GL}$ of equivariant $(\Lie\{1\})_\infty$ automorphisms of $\Tpoly$.
\item Both $\Tpoly$ and $\Dpoly$ are left $\mO = \R[x^1, x^2, \dots]$ modules. We may ask the components of the formality morphisms and automorphisms to be (poly)differential operators between $\mO$ modules. 

\item We may combine the above and require that the components of our $(\Lie\{1\})_\infty$ automorphisms and formality morphisms are $GL(\infty)$ invariant constant coefficient (i.~e., translation invariant) polydifferential operators. We call the  
set of such formality morphisms $M^{pg}\subset M^{GL}\subset M$ and the group $G^{pg}\subset G^{GL}\subset G$. For example, M. Kontsevich's formality morphism is an element of $M^{pg}$.
\end{itemize}

\subsection{Homotopy}
We will use the following ``cheap'' notion of homotopy between  $(\Lie\{1\})_\infty$ morphisms.

\begin{defi}
Let $\alg g$ and $\alg h$ be $(\Lie\{1\})_\infty$ algebras and let $f, f'\colon \alg g\to \alg h$ be $(\Lie\{1\})_\infty$ morphisms. We say that $f$ and $f'$ are \emph{homotopic on the nose} if there is an 
$(\Lie\{1\})_\infty$ morphism
\[
\alpha : \alg g \to \alg h[t, dt]
\]
such that the restriction $\alpha|_{t=0} = f$ and $\alpha|_{t=1}=f'$.

We say that $f, f'$ are homotopic if there are morphisms $f_1, f_2,\dots, f_n:G\to H$ for some $n$ such that $f$ is homotopic on the nose to $f_1$, $f_1$ is homotopic on the nose to $f_2$ etc., and $f_n$ is homotopic on the nose to $f'$. In other words, we define the relation of homotopy as the transitve closure of the relation of homotopy on the nose.
\end{defi}

This defines an equivalence relation on the space of morphism $\alg g\to \alg h$.\footnote{In fact, being homotopic on the nose is already an equivalence relation, identical to being homotopic as defined above, see, e.g., \cite[Theorem 2.1.1]{fukayacomo}. }
Furthermore, let $\alg k$ be another $(\Lie\{1\})_\infty$ algebra.
Suppose there are homotopic morphisms $f,f': \alg g\to \alg h$ and $g, g':\alg h\to \alg k$. Then clearly $g\circ f$ and $g'\circ f'$ are homotopic.

If in the previous definition $\alg g$ and $\alg h$ are copies of $\Tpoly$ or $\Dpoly$  and we consider morphism $f, f'$ of one of the restricted forms of the previous subsection, we will require that the homotopies also have the restricted form.

\begin{defi}
Let $\alg g,\alg h$ and $f,f'$ be as in the previous definition.
\begin{itemize}
\item Suppose the maps of complexes $\alg g\to \alg h$ induced by $f,f'$ (i.~e., their unary components) agree. We say that $f, f'$ are homotopic on the nose with fixed unary component if the homotopy $\alpha$ may be chosen such that, in addition to the requirements of the previous definition, the restrictions at fixed $t$, i.e., $\alpha|_t:\alg g\to \alg h$, have the same unary component as $f$ and $f'$.
\item Let $\alg g, \alg h$ again be copies of $\Tpoly$ or $\Dpoly$, and assume that $f,f'$ are $GL(\infty)$ equivariant morphisms. Then we say that  $f,f'$ are homotopic on the nose as $GL(\infty)$ equivariant morphisms if $\alpha$ as above may be chosen to be $GL(\infty)$ equivariant.
\item Restrict to $\alg g, \alg h$ being copies of $\Tpoly$ or $\Dpoly$, and assume that $f,f'$ are polydifferential morphisms. Then we say that  $f,f'$ are homotopic on the nose as polydifferential morphisms if $\alpha$ as above may be chosen to be a polydifferential morphism.
\item etc...
\end{itemize}
In each case we define the relation of homotopy between morphisms as the transitive closure of the relation of homotopy on the nose.
\end{defi}

\begin{defi}
Let $H\subset G=\ker (\Aut_\infty(\Tpoly)\to \Aut(\Tpoly))$ be the normal subgroup of automorphisms which are homotopic (with fixed unary part) to the identity morphism. The group of $(\Lie\{1\})_\infty$ automorphisms of $\Tpoly$ up to homotopy is defined to be $\bar G:=G/H$. Similarly we define $\bar G^{GL(\infty)}$ and $\bar G^{pg}$, as quotients of the subgroups of $GL(\infty)$ or constant coefficient polydifferential and $GL(\infty)$ equivariant morphisms, modulo the appropriate homotopy relation. 

We define $\bar M$ to be the quotient of $M$ obtained by identifying homotopic formality morphisms. Similarly, we define $\bar M^{GL(\infty)}$ and $\bar M^{pg}$.
\end{defi}

The right actions of $G$ on $M$ descends to the quotients, so that we obtain an action of $\bar G$ on $\bar M$. Similarly $\bar G^{GL(\infty)}$ and $\bar G^{pg}$ act on $\bar M^{GL(\infty)}$ and $\bar M^{pg}$ (from the right).

\subsection{V. Dolgushev's Theorem}
Formality morphisms in $M^{pg}$ are called \emph{stable formality morphisms} in \cite{vasilystable}. 
Note that the (pro-nilpotent) Lie algebra of degree zero cocycles of $\fGC$ acts on $\Tpoly$ by $(\Lie\{1\})_\infty$-derivations, via the map
$\fGC\to C(\Tpoly,\Tpoly)$ discussed above. Hence it also acts on the space of formality morphisms $M$.
Since the corresponding $(\Lie\{1\})_\infty$-derivations are all given by constant coefficient polydifferential operators this action descends to an action on $M^{pg}$.
Since exact cocyles act in a null-homotopic way, the action induces an action of the  Lie algebra $H^0(\fGC)$ on $\bar M^{pg}$. By integration we also 
obtain an action of the exponential group $\exp(H^0(\fGC))$ on $\bar M^{pg}$.
The main Theorem of \cite{vasilystable} is the following:
\begin{thm}[V. Dolgushev \cite{vasilystable}]\label{thm:vasstable}
 $\bar M^{pg}$ is an $\exp(H^0(\fGC))$-torsor.
\end{thm}

Here we want to remark that this result can essentially be obtained also as an application of the following general principle:
Suppose we have two isomorphic objects $o$, $o'$ in some category, then for some group $G$ it is equivalent to say that (i) $\Aut(o)\cong G$ or that (ii) the space of isomorphisms $o\to o'$ is a $G$-torsor.

In the present setting we want to take for the category the homotopy category of Lie algebras, $o=\Tpoly$ and $o'=\Dpoly$.
There is however a further small technical difficulty in that we defined the set of stable formality morphisms only as a subset of the set of all quasi-isomorphisms.
This technical difficulty may however be settled, as we will indicate in the following sections.

\subsection{Inversion}
Consider the Hochschild-Kostant-Rosenberg quasi-isomorphism (of complexes)
\[
\phi \colon \Tpoly \to \Dpoly \, .
\]
It has an explicit left inverse $\phi^{-1}$, i.e., $\phi^{-1}\circ\phi=\id_{\Tpoly}$. Furthermore, there is a homotopy $h$ given by explicit recursion relations such that $\id - \phi\circ\phi^{-1}=hd_H+d_H h$ where $d_H$ is the Hochschild differential on $\Dpoly$. In addition $h$ has the following properties:
\begin{enumerate}
\item $\phi^{-1}$ and $h$ commute with the $GL(\infty)$ action.
\item $\phi^{-1}$ and $h$ are $\mO$-linear, i.e., degree zero differential operators.
\item $\phi^{-1}$ and $h$ are a constant coefficient (degree zero) differential operators. 
\end{enumerate}
Explicit recursion formulas for $\phi^{-1}$ and for the homotopy $h$ can be extracted from \cite{vergne}. 

Now let $F\in M$ be a (stable) formality morphism $F\colon \Tpoly\to\Dpoly$. 
Using $\phi^{-1}$ and $h$ from above one can construct (by explicit recursion formulas) a left inverse $F^{-1}$ to $F$, i.e., $F^{-1}\circ F=\id_{\Tpoly}$. The components of $F^{-1}$ are all given by composing components of $F$ with copies of $\phi$, $\phi^{-1}$, $h$ and the Lie brackets. In particular, if $F$ falls in any of the restricted classes from above ($GL(\infty)$ equivariant, polydifferential, etc.), then $F^{-1}$ will fall in the same class because of the properties of $\phi^{-1}$ and $h$ asserted above.
Furthermore, the composition $F\circ F^{-1}$ is homotopic on the nose to $\id_{\Dpoly}$, and the homotopy is defined by suitable compositions of 
$\phi$, $\phi^{-1}$, $h$ and the Lie brackets. 
In particular, if $F$ falls in any of the restricted classes from above ($GL(\infty)$ equivariant, polydifferential, etc.) then $F\circ F^{-1}$ is homotopic on the nose to $\id_{\Dpoly}$ in that restricted class.

\subsection{Formality morphisms up to homotopy}

\begin{lemma}
\label{lem:torsor}
 $\bar M$ is a $\bar G$-torsor, $\bar M^{GL(\infty)}$ is a $\bar G^{GL(\infty)}$-torsor and $\bar M^{pg}$ is a $\bar G^{pg}$-torsor.
\end{lemma}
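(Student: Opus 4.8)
The plan is to verify, in each of the three cases, the defining conditions of a torsor: non-emptiness, transitivity of the action, and freeness of the action (that the right action descends to the quotients has already been recorded). Non-emptiness is immediate, since M. Kontsevich's formality morphism lies in $M^{pg}\subset M^{GL(\infty)}\subset M$, so all three quotients are non-empty. Everything else rests on the inversion machinery of the previous subsection: for every $F\in M$ it produces a left inverse $F^{-1}$ with $F^{-1}\circ F=\id_{\Tpoly}$ and with $F\circ F^{-1}$ homotopic on the nose to $\id_{\Dpoly}$, and by the class-preserving properties (1)--(3) of $\phi^{-1}$ and $h$ the whole construction stays inside whichever restricted class ($GL(\infty)$-equivariant, or equivariant polydifferential) $F$ belongs to. Throughout I take the homotopy relation on $M$ to be homotopy with fixed unary component; since every element of $M$ has the HKR map $\phi$ as its unary component, this is the notion compatible with the definition $\bar G=G/H$.

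For transitivity I would, given $F,F'\in M$, set $g:=F^{-1}\circ F'\colon \Tpoly\to\Tpoly$. Its unary component is $\phi^{-1}\circ\phi=\id_{\Tpoly}$, so $g$ is an $(\Lie\{1\})_\infty$ morphism with invertible unary part, hence an automorphism, and it lies in $G$ because its unary component is the identity. If $F,F'$ are $GL(\infty)$-equivariant (resp.\ equivariant polydifferential), then $F^{-1}$ lies in the same class, so $g\in G^{GL(\infty)}$ (resp.\ $g\in G^{pg}$). Now $F\circ g=F\circ F^{-1}\circ F'$, and precomposing the on-the-nose homotopy from $F\circ F^{-1}$ to $\id_{\Dpoly}$ with $F'$ shows that $F\circ g$ is homotopic to $F'$ within the relevant class. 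Hence $[F]\cdot[g]=[F']$, proving transitivity in all three cases.

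For freeness, suppose $g\in G$ satisfies $[F]\cdot[g]=[F]$, i.e.\ $F\circ g$ is homotopic to $F$ with fixed unary component. Postcomposing this homotopy with the fixed morphism $F^{-1}$ (whose unary part $\phi^{-1}$ is constant in the homotopy variable) and using $F^{-1}\circ F=\id_{\Tpoly}$ shows that $g=\id_{\Tpoly}\circ g$ is homotopic to $\id_{\Tpoly}$, the unary component staying equal to $\id_{\Tpoly}$ throughout. Thus $g\in H$ and $[g]=[e]$ in $\bar G$; the same computation with the equivariant resp.\ polydifferential inverse gives freeness for $\bar G^{GL(\infty)}$ and $\bar G^{pg}$.

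The main obstacle is the bookkeeping of homotopy notions and restricted classes rather than any single hard estimate: one must ensure that the homotopy $F\circ F^{-1}\to\id_{\Dpoly}$ once precomposed with $F'$, and the homotopy $F\circ g\to F$ once postcomposed with $F^{-1}$, both remain homotopies with fixed unary component and inside the prescribed class. This is exactly what the explicit, $GL(\infty)$-equivariant, constant-coefficient polydifferential nature of $\phi^{-1}$ and $h$ guarantees; without these properties neither $F^{-1}$ nor the composed homotopies would be controlled, and the restricted torsor statements would break down. It is therefore worth checking carefully that the constructions of $F^{-1}$ and of the homotopy $F\circ F^{-1}\to\id_{\Dpoly}$ really do preserve fixed unary components, which is where I expect the only genuinely delicate verification to lie.
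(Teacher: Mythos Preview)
Your proposal is correct and follows essentially the same route as the paper: both arguments take $g=F^{-1}\circ F'$ for transitivity and postcompose a homotopy $F\circ g\sim F$ with $F^{-1}$ for freeness, relying throughout on the class-preserving properties of $\phi^{-1}$ and $h$. Your transitivity step is in fact slightly cleaner than the paper's---you use the homotopy $F\circ F^{-1}\sim\id_{\Dpoly}$ directly (precomposed with $F'$), whereas the paper first reduces to checking $F^{-1}FF^{-1}F'\sim F^{-1}F'$ and then appeals to $F^{-1}F=\id$, which implicitly still requires $FF^{-1}\sim\id$ to close the argument; you also make non-emptiness explicit, which the paper leaves unstated.
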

\begin{proof}
It is a purely formal statement (and standard fact), but let us do the argument for $\bar M$ and $\bar G$. 

Faithfulness: Let $f\in G$, $F\in M$ and suppose $Ff$ is homotopic to $F$. Then we need to show that $f$ is homotopic to the identity. Applying $F^{-1}$ we see that $F^{-1}Ff$ is homotopic to $F^{-1}F$. But $F^{-1}F$ is homotopic to the identity and hence $f$ is, too, being homotopic to $F^{-1}Ff$.

Transitivity: Let $F, F'\in M$. Then we need to show that there is an $f$ such that $Ff$ is homotopic to $F'$. Take $f=F^{-1} F'$, so we need to show that $F F^{-1} F'$ is homotopic to $F'$. It is sufficient to show that $F^{-1} F F^{-1} F'$ is homotopic to $F^{-1} F'$. But this is clear since both morphisms are equal.
\end{proof}

\subsection{Reduction to \texorpdfstring{$\Lie\{1\}$}{Lie\{1\}} algebra cohomology}
$G$ is a pro-unipotent group and thus the exponential group of its Lie algebra. The Lie algebra is the space of $(\Lie\{1\})_\infty$ derivations of $\Tpoly$, with zero unary component. This space in turn can be seen as the closed degree zero elements in $C(\Tpoly, \Tpoly)$, with vanishing unary (and zero-ary) part.
An element $\exp(x)\in G$, with $x$ a degree zero cocyle in $C(\Tpoly, \Tpoly)$, is homotopic to the identity iff $x$ is exact, cf. Lemma \ref{lem:linftyhomotopy} in Appendix \ref{app:linftyhomotopy}. 
Hence the Lie algebra of $H\subset G$ (with $H$ as above) is the Lie algebra of degree zero cocycles in $C(\Tpoly, \Tpoly)$, with vanishing unary and zero-ary part.
The Lie algebra of $\bar G$ is hence given by the subspace of $H^0(\Tpoly, \Tpoly)$ of elements without unary or zero-ary part. Let us denote this subspace by $H^0(\Tpoly, \Tpoly)'$.
Hence, using Lemma \ref{lem:torsor} we see that $\bar M$ is a torsor over the exponential group of $H^0(\Tpoly, \Tpoly)'$.
Similar reasoning applies for the $GL(\infty)$ equivariant or constant coefficient polydifferential cases, so we obtain the following statements:

\begin{itemize}
\item The space $\bar M^{GL(\infty)}$ is a torsor over 
$H^0( C(\Tpoly, \Tpoly)^{GL(\infty)} )'$, where the $'$ again means that we restrict to those derivations having vanishing unary and zero-ary part.
\item The space $\bar M^{cg}$ is a torsor over the exponential group of the Lie algebra
$H^0( C_{polydiff}(\Tpoly, \Tpoly)^{GL(\infty)} )'$. Here $C_{polydiff}(\Tpoly, \Tpoly)\subset C(\Tpoly, \Tpoly)$ are those chains given by $\mO$-polydifferential operators, and the $'$ again means that we require vanishing unary and zero-ary part.
\end{itemize}

\subsection{Reduction to graph cohomology}

As we saw above (or by definition), $C(\Tpoly, \Tpoly)^{GL(\infty)}\cong \fXGC^{nt}$. It is not hard to see that the graphs that give rise to polydifferential morphism are those in the image of $\fGC$. Hence, combining the statements above we obtain the following result:
\begin{itemize}
\item The space $\bar M^{GL(\infty)}$ is a torsor over the exponential group of
$H^0( \fXGC )'\cong H^0(\fGC)\cong H^0(\GC) \cong\grt_1$, i.~e., over the Grothendieck-Teichm\"uller group.
\item The space $\bar M^{pg}$ is a torsor over the exponential group of $H^0(\fGC)\cong\grt_1$, i.~e., over the Grothendieck-Teichm\"uller group.
\end{itemize}
In the first case the $'$ again means that we restrict to those elements having vanishing unary and zero-ary part. In other words, we forbid cohomology classes represented by graphs with less than 2 input vertices. In the second case this is not necessary since there are no such classes.
We furthermore used Remark \ref{rem:deg0}.

This recover the result of \cite{vasilystable} along a different (less combinatorial) route.

\appendix

\section{Homotopy theoretic Lemma}\label{app:linftyhomotopy}
In the proof of Theorem \ref{thm:vasstable} we sketched above the following homotopy theoretic lemma is used.

\begin{lemma}\label{lem:linftyhomotopy}
Let $\alg g$ be an $L_\infty$-algebra and $D_1$ and $D_2$ two $L_\infty$ derivations of $\alg g$, i.e., coderivations of the cofree cocommutative coalgebra $\mathbf{S}(\alg g[1])$. Suppose that these coderivations do not have linear terms, i.e., $D_1|_{\alg g} = D_2|_{\alg g} = 0$, so that the exponential maps ($L_\infty$ morphisms of $\alg g$) $\exp(D_1)$ and $\exp(D_2)$ are defined. Then $\exp(D_1)$ is homotopy equivalent to $\exp(D_2)$, with a homotopy $F:\alg g\to \alg g[t,dt]$ whose linear term is the identity at each $t$ if and only if $D_1$ is cohomologous to $D_2$.
\end{lemma}
\begin{proof}
First, suppose that $D_1$ and $D_2$ are cohomologous, say $D_2-D_1 = [d, x]$ for some (pre-)$L_\infty$ derivation $x$.
Then consider the derivation $D:= tD_2 + (1-t)D_1 + x dt$. Its exponential $\exp(D)$ is an $L_\infty$ automorphism of $\alg g[t, dt]$ that restricts to $\exp(D_1)$ and $\exp(D_2)$ at $t=0$ and $t=1$. Composing with the ``constant'' map $\alg g\to \alg g[t, dt]$ we obtain the desired homotopy $\alg g\to \alg g[t,dt]$ between $\exp(D_1)$ and $\exp(D_2)$.

Conversely, suppose that $F_0:=\exp(D_1)$ and $F_1:=\exp(D_2)$ are homotopic via some $L_\infty$ morphism $F : \alg g\to \alg g[t,dt]$ restricting to $F_0$ and $F_1$ at $t=0,1$ as in the Lemma.
Concretely, $F$ consists of a family $F_t:\alg g \to \alg g$ of $L_\infty$ morphisms together with a family $H_t$ of coderivations of $F_t$, such that 
\[
\dot{ F}_t=[d,H_t]. 
\]
It follows that $\dot{F}_t= F_t [d,h_t]$, with $h_t:=F_t^{-1}H_t$ a coderivation of $\alg g$.

Now consider the $L_\infty$ derivation $D_t:=\log(F_t)$. Taking the logarithm is possible since $F_t$ is assumed to have a fixed linear term equal to the identity map.
Clearly 
\[
 D_2-D_1 = \int_0^1 \dot{D}_t dt
\]
and hence it is sufficient to show that $\dot{D}_t$ is exact.
But 
\begin{align*}
 \dot{D}_t &= \frac{d}{dt}\log(F_t)
 = \sum_{n\geq 1} \frac 1 n \sum_{k=0}^{n-1} (1-F_t)^k F_t [d,h_t] (1-F_t)^{n-k-1}
 \\&= [d, \sum_{n\geq 1} \frac 1 n \sum_{k=0}^{n-1} (1-F_t)^k F_t h_t (1-F_t)^{n-k-1}].
\end{align*}
One checks that 
\[
  \tilde h_t := \sum_{n\geq 1} \frac 1 n \sum_{k=0}^{n-1} (1-F_t)^k F_t h_t (1-F_t)^{n-k-1}
\]
is a derivation and hence $\dot{D}_t=[d,\tilde h_t]$ is exact as required.
Indeed, to see that $\tilde h_t$ is a derivation it is sufficient to verify that the element
\[
 Z:=\sum_{n\geq 1} \frac 1 n \sum_{k=0}^{n-1} (1-e^X)^k e^X Y (1-e^X)^{n-k-1}
\]
in the completed universal enveloping algebra of a free Lie algebra in two symbols $X$ and $Y$ is primitive, i.e., its coproduct satisfies $\Delta Z=1\otimes Z+Z\otimes 1$.
But $Z=\frac{d}{dt}\mid_{t=0} \log(e^X e^{tY})$ is a derivative at $t=0$ of a family of primitive elements and hence primitive.
\end{proof}

\bibliographystyle{plain}

\end{document}